\tikzset{>=stealth', arrow/.style={->}}
\newsavebox{\@brx}
\newcommand{\llangle}[1][]{\savebox{\@brx}{$\m@th{#1\langle}$}%
	\mathopen{\copy\@brx\kern-0.5\wd\@brx\usebox{\@brx}}}
\newcommand{\rrangle}[1][]{\savebox{\@brx}{$\m@th{#1\rangle}$}%
	\mathclose{\copy\@brx\kern-0.5\wd\@brx\usebox{\@brx}}}
\newtheorem{theorem}{Theorem}[section]
\newtheorem{corollary}[theorem]{Corollary}
\newtheorem{lemma}[theorem]{Lemma}
\newtheorem{proposition}[theorem]{Proposition}
\theoremstyle{definition}
\newtheorem{example}[theorem]{Example}
\newtheorem{remark}[theorem]{Remark}
\numberwithin{equation}{subsection}
\newtheorem*{ack}{Acknowledgement}
\newtheorem*{declare}{Declaration}
\newcommand{\Env}{\operatorname{Env}}
\newcommand{\Stab}{\operatorname{Stab}}
\newcommand{\C}{\operatorname{C}}
\newcommand{\Inn}{\operatorname{Inn}}
\newcommand{\Hom}{\operatorname{Hom}}
\newcommand{\Map}{\mathrm{Map}}
\newcommand{\Ha}{\textbf{H}}
\newcommand{\Za}{\textbf{Z}}
\newcommand{\Ba}{\textbf{B}}
\newcommand{\Ca}{\textbf{C}}
\newcommand{\ca}{\textbf{c}}
\begin{document}
	\title{Second bounded cohomology of knot quandles}
	\author{Deepanshi Saraf}
	\author{Mahender Singh}
	
	\address{Department of Mathematical Sciences, Indian Institute of Science Education and Research (IISER) Mohali, Sector 81,  S. A. S. Nagar, P. O. Manauli, Punjab 140306, India.}
	\email{saraf.deepanshi@gmail.com}
	\email{mahender@iisermohali.ac.in}

	\subjclass[2020]{Primary 57M27; Secondary 57K12, 20J05}
	\keywords{Amenable group, bounded quandle cohomology, bounded group cohomology, free quandle, knot quandle, quasimorphism, stable commutator length}

\begin{abstract}
In this paper, we explore the bounded cohomology of quandles and its applications to knot theory. We establish two key results that provide sufficient conditions for the infinite dimensionality of the second bounded cohomology of quandles. The first condition involves a subspace of homogeneous group quasimorphisms on the inner automorphism group of the quandle, whereas the second condition concerns the vanishing of the stable commutator length on a subgroup of this inner automorphism group. As topological applications, we show that the second bounded cohomology of the quandle of any non-split link whose link group is non-solvable as well as the quandle of any split link, is infinite dimensional. From these results, we conclude that the second bounded cohomology of the knot quandle detects the unknot. On the algebraic side, we prove that the second bounded cohomology of a free product of quandles is infinite dimensional if the inner automorphism group of at least one of the free factors is amenable. This leads to the result that the second bounded cohomology of free quandles of rank greater than one, as well as their canonical quotients, is infinite dimensional.
\end{abstract}	
\maketitle
	
\section{Introduction}
Racks and quandles are self-distributive algebraic structures, and their defining axioms correspond to algebraic representations of the Reidemeister moves for planar diagrams of knots and links in 3-sphere. In addition to providing powerful invariants for knots and links, as shown by Joyce \cite{Joyce1979} and Matveev \cite{MR0672410}, these structures appear in a broad range of mathematics, including Hopf algebras \cite{MR1994219} and set-theoretical solutions to the Yang-Baxter equation \cite{MR1183474}, to name a few.
\par 

A (co)homology theory for racks and quandles was initially introduced by Fenn and Rourke \cite{MR1364012}, who approached it from a homotopy-theoretic perspective using classifying spaces, defining the homology of the rack space of a rack. Carter et al. \cite{MR1990571} applied rack and quandle cohomology to knot theory by constructing state sum invariants. These cohomology theories have since been generalised in \cite{MR1994219, MR3558231}. More recently, Szymik \cite{MR3937311} showed that quandle cohomology can be viewed as a form of Quillen cohomology.
\par

There has been continuing interest in developing knot invariants, and an important goal is for these invariants to be able to distinguish the unknot. For example, while the knot group can identify the unknot, there are non-trivial knots that share the same Alexander polynomial as the unknot. In contrast, the analogous question for the Jones polynomial remains unresolved. Interestingly, several invariants derived from quandles have proven effective in detecting the unknot. Winker \cite{MR2634013} showed that a knot is trivial if and only if its involutory quandle is trivial. Later, Eisermann \cite{MR1954330} showed that the second quandle homology $H_2(Q(K), \mathbb{Z})=0$ if and only if $K$ is the unknot, and $H_2(Q(K), \mathbb{Z})\cong \mathbb{Z}$ whenever $K$ is a non-trivial knot. Since $H^2(Q(K),A) \cong \Hom(H_2(Q(K), \mathbb{Z}),A)$ for any abelian group $A$,  it follows that the second quandle cohomology also distinguishes the unknot. More recently, Szymik  \cite{MR3937918} showed that Alexander-Beck modules can similarly detect the unknot.
\par

Bounded cohomology was introduced by Johnson in the context of Banach algebras \cite{MR0374934} and later extended by Gromov to topological spaces \cite{MR0686042}. It has since become a key tool in the geometry of manifolds \cite{MR0686042}, rigidity theory \cite{MR1911660}, and stable commutator length \cite{MR2527432}.  In a recent work, K\k{e}dra \cite{MR4779104} explored a natural metric on racks and quandles based on their inner symmetries. He also introduced a bounded cohomology theory for racks and quandles, proving that a rack or a quandle is bounded with respect to this metric if and only if the comparison map from the second bounded cohomology to the second ordinary quandle cohomology is injective. Furthermore, K\k{e}dra showed that knot quandles and free quandles are unbounded, implying that their second bounded cohomology is non-trivial. Fujiwara \cite{MR1476898} established that the second bounded cohomology of a knot group is infinite-dimensional, raising the natural question of whether the second bounded cohomology of knot quandles is also infinite-dimensional. Since the bounded cohomology of knot quandles provides a new invariant, another question is whether it can distinguish the unknot. In this paper, we address both of these questions and also establish some general results. Similar to groups, our findings suggest that the second bounded rack or quandle cohomology is well-behaved, although fully understanding the bounded cohomology remains a challenging task.
\par

In Section \ref{section prelim}, we review the basic concepts of quandle theory that will be necessary for our work. Section \ref{section quasimorphisms}  provides an overview of key results on quasimorphisms. We show that there is an isomorphism between a quotient of the space of quandle quasimorphisms and the kernel of the comparison map from the bounded cohomology to the ordinary quandle cohomology (Proposition \ref{kernel-qh quandle}). Section \ref{section sec bounded cohomology} presents our main results.  Using the fact that a quandle $X$ can be written as a union $\sqcup_{i \in I}(G/H_i,z_i)$ of certain coset quandles, we prove two key results that provide sufficient conditions  for the infinite dimensionality of the second bounded cohomology of $X$.  The first condition is infinite dimensionality of the subspace of  homogenous group quasimorphisms of $G$ that vanish on a subgroup $H_{i_0}$ (Theorem \ref{Second bounded Cohomology - vanishing qm}), while the second condition  involves the infinite dimensionality of the second bounded group cohomology of $G$ and the vanishing of the stable commutator length on a subgroup $H_{i_0}$ of $G$ (Theorem \ref{second bounded cohomology}).  Section \ref{section link and free quandles} contains applications of these results to knots, links,  free quandles and their canonical quotients. We prove that the second bounded cohomology of the quandle of any split link is infinite dimensional (Proposition \ref{second bounded cohomology of split link}), as is the second bounded cohomology of the quandle of any non-split link with a non-solvable link group (Proposition \ref{second bounded cohomology of non-split link}). From this, we immediately deduce that the second bounded cohomology of the knot quandle detects the unknot.  Next, we show that the second bounded quandle cohomology of a free product of quandles is infinite dimensional if the group of inner automorphisms of at least one of the free factors is amenable (Proposition \ref{second bounded cohomology of free product}).  In fact, we establish a similar result for a free $k$-quandle of rank $n$ whenever $n \geq 2$ and $k \geq 3$ or $n \geq 3$ and $k =2$ (Proposition \ref{bounded cohom free n-quandle}). 
\medskip

\section{Preliminaries}\label{section prelim}
This section reviews the essential preliminaries that will be employed throughout the paper. 
\par

\subsection{Racks and quandles} To set our conventions, recall that a {\it quandle} is a set $X$ equipped with a binary operation $\ast$ that satisfies the following axioms:
	\begin{enumerate}
		\item $x \ast x = x$ for all $x \in X$.
		\item For each pair $x, y \in X$, there exists a unique $z \in X$ such that $x = z \ast y$.
		\item $(x \ast y) \ast z = (x \ast z) \ast (y \ast z)$ for all $x, y, z \in X$.
	\end{enumerate}

An algebraic structure satisfying the last two axioms is called a  {\it rack}. The second quandle axiom is equivalent to the existence of a dual binary operation on $X$, denoted by $(x, y) \mapsto x \ast^{-1} y$, satisfying $x \ast y = z$ if and only if $x = z \ast^{-1} y$ for all $x, y, z \in X$. For each $x \in X$, the map $S_x: X \rightarrow X$ given by $S_x(y) = y \ast x$ is an automorphism of $X$ that fixes $x$. The group $\Inn(X)$ generated by such automorphisms is called the {\it inner automorphism group} of $X$. 
\par
Given a quandle $X$, we define its {\it enveloping group} $\Env(X)$ with the presentation
$$ \Env(X)=\langle e_{x}, ~x \in X \mid e_{x \ast y}=e_{y}^{-1} e_{x} e_{y}~\textrm{for all}~x, y \in X \rangle.$$ 	
There is a right action of $\Env(X)$ on $X$, which on generators of  $\Env(X)$ is given by $$x \cdot e_y  = x*y$$ for $x, y \in X$. The quandle $X$ is called {\it connected} if $\Env(X)$ acts transitively on $X$. We denote a set of representatives of orbits under this action by $\mathcal{O}(X)$.
	
	\begin{example}
	Let $L$ be an oriented link in $\mathbb{S}^3$. In \cite[Section 4.5]{Joyce1979} and \cite[Section 6]{MR0672410}, Joyce and Matveev independently gave a topological construction of the  {\it link quandle} $Q(L)$ of $L$,  and proved it to be an invariant of the isotopy type of  $L$. Further, it was  proved that $Q(L)$ can also be obtained from a regular diagram $D$ of $L$. Suppose that $D$ has $s$ arcs and $t$ crossings. We assign labels $x_{1}, \ldots, x_{s}$ to the arcs of $D$, and then introduce the relation $r_l$ given by  $x_k*x_j=x_i$ or $x_k*^{-1}x_j=x_i$ at the $l$-th crossing of $D$ as shown in Figure \ref{link quandle}. They proved that $$Q(L) \cong \langle x_1, \ldots, x_s \mid r_1,  \dots, r_t\rangle.$$
If $V(L)$ is a tubular neighborhood of $L$, then $\Env(Q(L))\cong \pi_1(\mathbb{S}^3\setminus V(L))$, the link group of $L$.
		\begin{figure}[hbt!]
			\begin{subfigure}{0.4\textwidth}
				\centering
				\begin{tikzpicture}[scale=0.6]
					\node at (0.6,-1.2) {{\small $x_k$}};
					\node at (-1.2,0.4) {{\small $x_j$}};
					\node at (2,1.2) {{\small $x_k \ast x_j = x_i$}};
					\begin{knot}[clip width=6, clip radius=4pt]
						\strand[->] (-2,0)--(2,0);
						\strand[->] (0,-2)--(0,2);
					\end{knot}
				\end{tikzpicture}
				\caption{Positive crossing}
			\end{subfigure}
			\begin{subfigure}{0.4\textwidth}
				\centering
				\begin{tikzpicture}[scale=0.6]
					\node at (2.3,1.2) {{\small $x_k \ast^{-1} x_j = x_i$}};
					\node at (-1.2,0.4) {{\small $x_j$}};
					\node at (0.6,-1.2) {{\small $x_k$}};
					\begin{knot}[clip width=6, clip radius=4pt]
						\strand[->] (2,0)--(-2,0);
						\strand[->] (0,-2)--(0,2);
					\end{knot}
				\end{tikzpicture}
				\caption{Negative crossing}
			\end{subfigure}
			\caption{Quandle relations at crossings.}
			\label{link quandle}
		\end{figure}
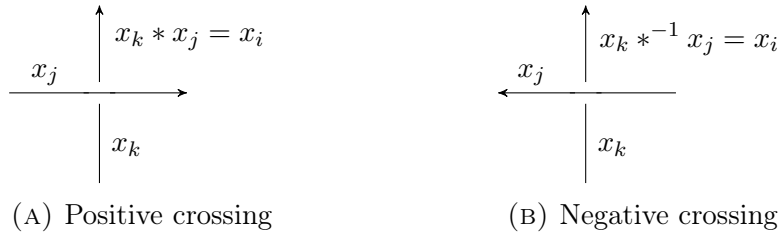
	\end{example}
	
\begin{example}\label{example link quandle}
Although links in the 3-sphere offer a rich source of quandles, other interesting examples, especially those stemming from groups, also arise. We will rely heavily on one such construction.
Let $G$ be a group, $\{z_i \mid i \in I \}$ a set of elements of $G$, and $\{H_i \mid i \in I \}$ a set of subgroups of $G$ such that $H_i \le \C_G(z_i)$ for each $i$. Then we can define a quandle structure on the disjoint union $\sqcup_{i \in I} G/H_i$ of right cosets by
			$$H_i x \ast H_j y = H_i z_i^{-1}xy^{-1}z_j y.$$ 
We denote this quandle by $\sqcup_{i \in I} (G/H_i, z_i )$. Note that any quandle can be expressed as a quandle of this form \cite[Section 2.4]{Joyce1979}.

	\end{example}
\medskip

\subsection{Bounded cohomology of groups, racks and quandles}
Next, we recall the definition of bounded cohomology of groups \cite{MR3726870}, and that of racks and quandles \cite{MR4779104}. 
\par

We first recall the construction of the cochain complex defining the group cohomology with real coefficients \cite{MR1324339}. Let $G$ be a group. For each $n \geq 0$, let $\Ca^n(G, \mathbb{R})$ be the real vector space with the basis consisting of maps $G^n \to \mathbb{R}$, where we take $G^0$ to be the trivial group. The coboundary $$\partial^n:\Ca^n(G, \mathbb{R}) \to \Ca^{n+1}(G, \mathbb{R})$$ is given by
\begin{equation}\label{eqn1}
\partial^n(\sigma)(g_1,\ldots,g_{n+1}) = \sigma(g_2,\ldots,g_{n+1})  +  \sum_{i=1}^n (-1)^{i}\sigma(g_1,\ldots,g_ig_{i+1},...,g_{n+1}) + \sigma(g_1,\ldots,g_n),
\end{equation}
for all $\sigma \in \Ca^n(G, \mathbb{R})$ and $(g_1,\ldots,g_{n+1}) \in G^{n+1}$. The cohomology $\Ha^*(G,\mathbb{R})$ of the cochain complex $\Ca^*(G,\mathbb{R})$ is defined to be the {\it group cohomology} of $G$ with coefficients in $\mathbb{R}$. Let $\Ca^*_b(G, \mathbb{R})$ denote the cochain complexes of bounded chains with respect to the sup-norm. Then the cohomology $\Ha_b^*(G,\mathbb{R})$ of $\Ca^*_b(G,\mathbb{R})$ is called the {\it bounded group cohomology} of $G$ with coefficients in $\mathbb{R}$. We denote the group of bounded group cocycles and coboundaries by  $\Za_b^*(G,\mathbb{R})$ of $\Ba^*_b(G,\mathbb{R})$, respectively. The inclusion $\Ca^*_b(G,\mathbb{R}) \to \Ca^*(G, \mathbb{R})$ induces the {\it comparison} homomorphism $\ca^* : \Ha^*_b(G,\mathbb{R}) \to \Ha^*(G,\mathbb{R})$.
\par

Let $X$ be a rack and  $n \ge 0$ an integer. Let $C^n(X, \mathbb{R})$ be the real vector space with the basis consisting of maps $X^n \to \mathbb{R}$, where we understand $X^0$ to be a singleton set. The coboundary operator $\delta^{n-1} : C^{n-1}(X, \mathbb{R}) \to C^{n}(X, \mathbb{R})$ is defined by
	\begin{equation} \label{eq:coboundary_operator}
		\delta^{n-1} f(x_1, \ldots, x_n) = \sum_{i=1}^{n} (-1)^i \big(f(x_1, \ldots, x_{i-1}, x_{i+1}, \ldots, x_n) - f(x_1* x_i, \ldots, x_{i-1}* x_i, x_{i+1}, \ldots, x_n)\big)
	\end{equation}
for all $f \in C^{n-1}(X, \mathbb{R})$ and $x_i \in X$. A routine check shows that $C^*(X, \mathbb{R}) = \{C^n(X, \mathbb{R}), \delta^n\}$ is a cochain complex. The cohomology  $H^n(X, \mathbb{R})$ of this complex is called the {\it rack cohomology} of the rack $X$.
\par
If $X$ is a quandle, then we consider the subcomplex $D^*(X, \mathbb{R})$ of $C^*(X, \mathbb{R})$ defined by
	\begin{equation} \label{eq:subcomplex}
		D^n(X, \mathbb{R}) = \{ f \in C^{n}(X, \mathbb{R}) \mid f(x_1, \ldots, x_n) = 0~\text{whenever}~ x_i = x_{i+1}~ \text{for some}~ i \}
	\end{equation}
	for $n \geq 2$, and $D^n(X, \mathbb{R}) = 0$ for $n \leq 1$. This gives the quotient cochain complex $\overline{C}^*(X, \mathbb{R}) = \{C^n(X, \mathbb{R})/D^n(X, \mathbb{R})), \delta^n\}$, where $\delta^n$ is the induced coboundary operator. Then, the cohomology  of this complex is called the {\it quandle cohomology} of $X$. For ease of notation, we denote both the rack and the quandle cohomology by $H^n(X, \mathbb{R})$. We refer the reader to \cite[Section 3]{MR1990571} for details on quandle cohomology. 	
\par

The bounded cohomology of racks and quandles can be defined similarly to that of groups \cite{MR4779104}. Let $X$ be a rack. Then, for a map $f : X \to \mathbb{R}$, we write $\|f\|_\infty = \sup \{ |f(x)|~ \mid x \in X \}$. Let $C^n_{b}(X, \mathbb{R})$ be the subspace of $C^n(X, \mathbb{R})$ consisting of all maps $X^n \to \mathbb{R}$ which are bounded with respect to the sup-norm $\|\cdot\|_\infty$. The coboundary operators $\delta^n$ restrict to $C^n_{b}(X, \mathbb{R})$, and hence $C^*_{b}(X, \mathbb{R}) = \{C^n_{b}(X, \mathbb{R}), \delta^n\}$ forms a cochain complex. The cohomology  $H^n_b(X, \mathbb{R})$ of this complex is called the \textit{bounded rack cohomology}. The natural inclusion $C^n_{b}(X, \mathbb{R}) \hookrightarrow C^n(X, \mathbb{R})$ induces a homomorphism $c^n : H^n_{b}(X, \mathbb{R}) \to H^n(X, \mathbb{R})$, called the \textit{comparison map}. 
If $X$ is a quandle, we can similarly define the \textit{bounded quandle cohomology} and establish comparison maps. For simplicity, we denote the bounded cohomology of both the rack and the quandle as $H^n_b(X, \mathbb{R})$.
\par

Let $X$ be a rack or a quandle. K\k{e}dra defined a metric on each connected component of $X$ by 
	$$d(x,y)=\mathrm{min}\{n \in \mathbb{N} \mid y=(((x*x_1)*x_2)* \cdots )* x_n~\textrm{for some}~x_1, \ldots, x_n \in X\}.$$
We say that $X$ is {\it bounded} if the metric $d$ is bounded (has finite diameter) on each connected component of $X$. Otherwise, it is called {\it unbounded}. It turns out that knot quandles and free quandles are unbounded with respect to this metric \cite[Example 3.17]{MR4779104}. 
\par

A group $G$ is said to be {\it uniformly perfect} if there exists $N > 0$ such that any element of $G$ can be represented as a product of at most $N$ commutators. It is known from \cite[Corollary 2.11]{MR0787909} that if $G$ is uniformly perfect, then the comparison map $\ca^2: \Ha^2_b(G, \mathbb{R}) \to \Ha^2(G, \mathbb{R})$ is injective. For racks and quandles, the following result establishes a similar relation of this metric with the second bounded  cohomology \cite[Theorem 1.2]{MR4779104}.

\begin{theorem}
A rack or a quandle $X$ is bounded if and only if the kernel of the comparison map $c^2: H^2_b(X, \mathbb{R}) \to H^2(X, \mathbb{R})$ is trivial.
\end{theorem}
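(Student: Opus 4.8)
The plan is to translate the statement into a dictionary between $\ker c^2$ and a space of quandle quasimorphisms, and then to read off boundedness of $d$ on each side. First I would record the degree-one coboundary: specialising \eqref{eq:coboundary_operator} to $n=2$ gives $\delta^1 f(x_1,x_2)=f(x_1)-f(x_1\ast x_2)$ for $f\in C^1(X,\mathbb{R})$, so the $1$-cocycles are exactly the cochains that are constant on each connected component of $X$. Calling an $f\in C^1(X,\mathbb{R})$ a \emph{quasimorphism} when $\delta^1 f$ is bounded, a class of $H^2_b(X,\mathbb{R})$ lies in $\ker c^2$ precisely when it is represented by $\delta^1 f$ for some quasimorphism $f$, and it vanishes precisely when $f$ is the sum of a bounded cochain and a component-constant cochain. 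This identifies $\ker c^2$ with the quasimorphisms modulo that sum. I would also note the symmetry remark that, because $\delta^1 f(x\ast^{-1}y,\,y)=f(x\ast^{-1}y)-f(x)$, a bound on $\delta^1 f$ simultaneously controls the increments of $f$ along $\ast$ and along $\ast^{-1}$. The theorem then reduces to the assertion that $X$ is bounded if and only if every quasimorphism has uniformly bounded oscillation on the components, i.e. is trivial modulo bounded and component-constant cochains.

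For the forward implication, suppose $d$ is bounded, say with diameter at most $N$ on every component, and let $f$ be a quasimorphism with $\|\delta^1 f\|_\infty\le C$. Given $x,y$ in one component, choose a geodesic $y=(\cdots(x\ast x_1)\ast\cdots)\ast x_n$ with $n=d(x,y)\le N$, set $p_0=x$ and $p_i=p_{i-1}\ast x_i$, and telescope: since $f(p_{i-1})-f(p_i)=\delta^1 f(p_{i-1},x_i)$, we get $|f(x)-f(y)|\le nC\le NC$. Hence $f$ has oscillation at most $NC$ on every component; subtracting the component-constant cochain that records its values at a choice of basepoints leaves a bounded cochain, so $[\delta^1 f]=0$ and therefore $\ker c^2=0$.

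For the converse I would argue contrapositively, assuming a component $C$ has infinite $d$-diameter and producing an unbounded quasimorphism supported on $C$. Here it is cleanest to pass to the symmetric word metric $\rho$ on $C$ generated by both $\ast$- and $\ast^{-1}$-moves, so that $\rho\le d$. The symmetry remark shows every quasimorphism is Lipschitz for $\rho$, while conversely $f(x):=\rho(x_0,x)$, for a fixed $x_0\in C$ and extended by $0$ off $C$, is itself a quasimorphism: the triangle inequality gives $|\rho(x_0,x)-\rho(x_0,x\ast y)|\le\rho(x,x\ast y)\le 1$, whence $\|\delta^1 f\|_\infty\le 1$, and $f$ is unbounded exactly when $\rho$ has infinite diameter on $C$. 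The hard part, which I expect to be the main obstacle, is precisely the passage from $d$ to $\rho$: one must rule out that the inverse moves $\ast^{-1}$ boundedly undo arbitrarily long $\ast$-words, i.e. show that infinite $d$-diameter forces infinite $\rho$-diameter. This is where the quandle axioms — in particular that each $S_x$ is an automorphism fixing $x$ and that $S_{x\ast y}=S_y^{-1}S_xS_y$ — must be used to compare the one-sided operation metric with its symmetrisation. Granting this, $f$ is an unbounded quasimorphism, so it is not congruent to a bounded cochain modulo component-constant cochains, and therefore $\ker c^2\ne 0$, which establishes the contrapositive and completes the proof.
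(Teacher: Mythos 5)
The paper itself offers no proof of this statement: it is quoted from K\k{e}dra \cite[Theorem 1.2]{MR4779104}, and the only related ingredient established internally is Proposition \ref{kernel-qh}, which is precisely the dictionary $\ker(c^2)\cong \mathcal{R}(X,\mathbb{R})/\big(C^1_b(X,\mathbb{R})+\Map(\pi_0(X),\mathbb{R})\big)$ that you rederive in your opening paragraph. Judged on its own terms, your argument is the standard one and is essentially complete. The forward implication is correct as written: telescoping $\delta^1 f(p_{i-1},x_i)=f(p_{i-1})-f(p_i)$ along a positive word of length at most $N$ bounds the oscillation of any quasimorphism on each component by $N\|\delta^1 f\|_\infty$, after which subtracting a component-constant cochain lands in $C^1_b(X,\mathbb{R})$. (One small caveat: you should read ``bounded'' as a uniform bound $N$ over all components, as you implicitly do; with merely component-wise finite diameters the subtraction need not yield a bounded cochain, and the statement would in fact fail.)

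The one gap you flag --- that infinite $d$-diameter might not force infinite diameter for the symmetrised word metric $\rho$ --- is an artifact of how the metric is transcribed in this paper rather than a genuine obstacle. The displayed formula for $d$, which uses only positive $\ast$-moves, is not symmetric and need not even be finite within a connected component: in the permutation rack $\mathbb{Z}$ with $m\ast k=m+1$ (which appears as an example in Section \ref{section quasimorphisms}), the element $-1$ lies in the component of $0$ but is never of the form $((0\ast x_1)\ast\cdots)\ast x_n$. So the one-sided formula cannot literally be the intended metric; K\k{e}dra's metric is the word metric for the moves $S_y^{\pm 1}$, i.e.\ exactly your $\rho$, and your ``symmetry remark'' already shows that a bound on $\delta^1 f$ controls increments along both $\ast$ and $\ast^{-1}$. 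With that reading there is nothing to prove in the passage from $d$ to $\rho$: the basepoint-distance function $\rho(x_0,\cdot)$, extended by $0$ off the unbounded component, is an unbounded quasimorphism with $\|\delta^1 f\|_\infty\le 1$, it is not congruent to a bounded cochain modulo component-constant ones, and the contrapositive closes. In short, your proof is correct and follows the same route as the source; the only hesitation is caused by a misstated definition, not by a missing idea.
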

\medskip

\section{Quasimorphisms of groups, racks and quandles}\label{section quasimorphisms}
In this section, we revisit the concept of quasimorphisms for groups, racks, and quandles, along with their relationship to the corresponding bounded cohomology.

\subsection{Quasimorphisms of groups}  We refer to \cite{MR3726870} for more details on quasimorphisms of groups and the bounded group cohomology.
\par

A \textit{group quasimorphism} on a group $G$ is a map $f: G \to \mathbb{R}$ such that
\begin{equation} \label{eq:defect}
D(f) := \sup_{g, h \in G} |f(g) +f(h) -f(gh)| < \infty.
\end{equation}
The constant $D(f)$ is called the \textit{defect} of $f$. In terms of group cohomology, $f$ is a group quasimorphism if and only if $\partial^1(f)$ is bounded with respect to the sup-norm. We denote the vector space of all group quasimorphisms on $G$ by $\mathcal{G}(G,\mathbb{R}).$
\par	
A group quasimorphism $f: G \to \mathbb{R}$ is \textit{homogeneous} if it satisfies $f(g^n) = n \,f(g)$ for any $g \in G$ and $n \in \mathbb{Z}$. It is known that a homogeneous group quasimorphism is constant on conjugacy classes \cite[Subsection 2.2.3]{MR2527432}, that is,
	\begin{equation*} \label{eq:conjugacy}
		f(h^{-1} g h) =f(g)
	\end{equation*}
	for any $g, h \in G$.	The subspace of all homogeneous group quasimorphisms on $G$ is denoted by $\mathcal{HG}(G, \mathbb{R}).$
\par

Let $\ca^* : \Ha^*_b(G,\mathbb{R}) \to \Ha^*(G,\mathbb{R})$ be the comparison homomorphism. By \cite[Corollary 2.11]{MR3726870},  we have
	\begin{equation}
		\mathcal{G}(G, \mathbb{R}) = \mathcal{HG}(G, \mathbb{R}) \oplus \Ca^1_b(G, \mathbb{R})
	\end{equation}
		and
		\begin{equation}\label{ker-qh}
			\mathcal{HG}(G, \mathbb{R}) / \text{Hom}(G, \mathbb{R}) \cong \ker(\ca^2).
		\end{equation}
In fact, if $G$ is abelian, then \cite[Corollary 2.12]{MR3726870} gives
\begin{equation}\label{hom quasi hom equals hom}
\mathcal{HG}(G, \mathbb{R}) \cong \text{Hom}(G, \mathbb{R}).
\end{equation}
\medskip

\subsection{Quasimorphisms of racks and quandles} Next, we recall the definition of quasimorphism of racks and quandles \cite[Definition 5.1]{MR4779104}.
\par
A \textit{rack quasimorphism} on a rack $X$ is a map $f: X \to \mathbb{R}$ satisfying
\begin{equation} \label{eq:rack_quasimorphism}
\sup_{x,y \in X} |f(x) - f(x * y)| < \infty.
\end{equation}
\par

We denote the vector space of all rack quasimorphisms on a rack $X$ by $\mathcal{R}(X,\mathbb{R})$. As in the case of groups, $f$ is a rack quasimorphism if and only if $\delta^1(f)$ is bounded with respect to the sup-norm.  If $X$ is a quandle, then $f$ will be called a {\it quandle quasimorphism}, and we denote the corresponding vector space by $\mathcal{Q}(X,\mathbb{R})$. We shall use the following result \cite[Lemma 5.2]{MR4779104}.
	
\begin{lemma} \label{qh to bounded class}
Let $X$ be a rack (respectively, quandle). If $f : X \to \mathbb{R}$ is a rack (respectively, quandle) quasimorphism, then $\delta^1 (f)$ is a bounded rack (respectively, quandle) 2-cocycle such that its cohomology class $[\delta^1 (f)]$ lies in the kernel of the comparison map $c^2$. If $f$ is unbounded on a connected component of $X$, then $\delta^1(f)$ is non-trivial.
\end{lemma}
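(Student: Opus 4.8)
The plan is to compute $\delta^1 f$ explicitly, read boundedness straight off the quasimorphism condition, and then dispatch the cocycle property, the comparison map, and the non-triviality claim in turn. Evaluating the coboundary formula \eqref{eq:coboundary_operator} at $n=2$, the $i=1$ term vanishes and only the $i=2$ term survives, giving
\[
\delta^1 f(x_1, x_2) = f(x_1) - f(x_1 * x_2).
\]
Consequently $\|\delta^1 f\|_\infty = \sup_{x_1, x_2 \in X} |f(x_1) - f(x_1 * x_2)|$, and this is finite exactly because $f$ is a rack (respectively, quandle) quasimorphism in the sense of \eqref{eq:rack_quasimorphism}; hence $\delta^1 f \in C^2_b(X, \mathbb{R})$. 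In the quandle case I would record the additional observation that $\delta^1 f(x,x) = f(x) - f(x*x) = 0$ by the first quandle axiom $x*x = x$, so that $\delta^1 f$ vanishes on degenerate pairs and is a legitimate bounded quandle $2$-cochain.

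For the cocycle property, note that $\delta^1 f$ is by construction a coboundary in the full cochain complex $\{C^*(X, \mathbb{R}), \delta^*\}$, so $\delta^2(\delta^1 f) = 0$; since $\delta^1 f$ is bounded this identity persists in $C^3_b(X, \mathbb{R})$, and therefore $\delta^1 f$ is a bounded $2$-cocycle. For the comparison map, recall that $c^2 \colon H^2_b(X, \mathbb{R}) \to H^2(X, \mathbb{R})$ is induced by the inclusion $C^*_b \hookrightarrow C^*$, so $c^2([\delta^1 f])$ is represented by the same cochain $\delta^1 f$ regarded in the ordinary complex. There it is the coboundary of the $1$-cochain $f \in C^1(X, \mathbb{R})$, hence cohomologous to zero, which gives $[\delta^1 f] \in \ker(c^2)$.

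The only step carrying genuine content is the final non-triviality assertion, which I would treat as the main obstacle. Assume toward a contradiction that $[\delta^1 f] = 0$ in $H^2_b(X, \mathbb{R})$; by definition of the bounded coboundaries this means $\delta^1 f = \delta^1 h$ for some bounded $1$-cochain $h \in C^1_b(X, \mathbb{R})$, so that $u := f - h$ satisfies $\delta^1 u = 0$. Unravelling this identity gives $u(x_1) = u(x_1 * x_2)$ for all $x_1, x_2 \in X$; since every element of the connected component of a point $x$ is obtained from $x$ by finitely many operations of the form $(\,\cdot\,)*x_i$, the value of $u$ cannot change along a component, so $u$ is constant on each connected component of $X$. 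Now pick a component $C$ on which $f$ is unbounded. There $f = u + h$ with $u|_C$ constant and $h$ bounded, forcing $f|_C$ to be bounded, a contradiction. Hence no such $h$ exists and $[\delta^1 f] \neq 0$. The rack and quandle cases run in parallel, the sole additional point in the quandle setting being the vanishing of $\delta^1 f$ on degenerate pairs noted above.
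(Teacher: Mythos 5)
Your proof is correct and is essentially the standard argument: the paper itself imports this lemma from K\k{e}dra's work without reproving it, and your treatment of each step (the computation $\delta^1 f(x_1,x_2)=f(x_1)-f(x_1*x_2)$, boundedness from \eqref{eq:rack_quasimorphism}, exactness in the unbounded complex, and the non-triviality argument via $\delta^1(f-h)=0$ forcing $f-h$ to be constant on connected components) matches the reasoning the paper itself deploys in Proposition \ref{kernel-qh}. The only cosmetic point is that connected components are orbits of $\Inn(X)$, so they are generated by the operations $*$ \emph{and} $*^{-1}$; but since $u\circ S_y=u$ for every $y$ immediately gives $u\circ S_y^{-1}=u$, your conclusion that $u$ is constant on components stands.
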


Given a rack $X$, we now determine the kernel of the comparison map $c^2: H^2_b(X, \mathbb{R}) \to H^2(X, \mathbb{R})$.	By definition, $\ker(c^2)$ consists of $ [\alpha] \in H^2_b(X, \mathbb{R})$ such that $ \alpha = \delta^1(f)$ for some $f \in C^1(X, \mathbb{R})$. As noted above, $\delta^1(f)= \alpha $ being bounded implies that $f$ is a quasimorphism. Thus, we have a map $\overline{\delta}^1 : \mathcal{R}(X, \mathbb{R}) \to \ker(c^2)$ given by $f \mapsto [\delta^1(f)]$.

\begin{proposition}\label{kernel-qh}
Let $X$ be a rack. Then the map $\overline{\delta}^1 : \mathcal{R}(X, \mathbb{R}) \to \ker(c^2)$ induces an isomorphism
		$$
		\mathcal{R}(X, \mathbb{R}) / \big( C^1_b(X, \mathbb{R}) + \Map(\pi_0(X),\mathbb{R})\big) \cong \ker(c^2),
		$$
where $\pi_0(X)$ is the set of connected components of $X$.
\end{proposition}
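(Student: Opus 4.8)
The plan is to apply the first isomorphism theorem to the linear map $\overline{\delta}^1 : \mathcal{R}(X, \mathbb{R}) \to \ker(c^2)$, which is already well-defined by Lemma \ref{qh to bounded class}. Thus it suffices to prove that $\overline{\delta}^1$ is surjective and that its kernel equals $C^1_b(X, \mathbb{R}) + \Map(\pi_0(X), \mathbb{R})$. Before either step, I would record that this sum is genuinely a subspace of $\mathcal{R}(X, \mathbb{R})$: a bounded cochain $g$ satisfies $\|\delta^1 g\|_\infty \le 2\|g\|_\infty$, so $C^1_b(X, \mathbb{R}) \subseteq \mathcal{R}(X, \mathbb{R})$, while a function constant on each connected component has $\delta^1 f = 0$, so $\Map(\pi_0(X), \mathbb{R}) \subseteq \mathcal{R}(X, \mathbb{R})$ as well, the latter being viewed as the functions on $X$ that factor through $\pi_0(X)$.

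For surjectivity I would reuse the observation made just before the statement. Given a class $[\alpha] \in \ker(c^2)$, the cocycle $\alpha$ is bounded and satisfies $c^2([\alpha]) = 0$ in $H^2(X, \mathbb{R})$, so $\alpha = \delta^1(f)$ for some (a priori unbounded) $f \in C^1(X, \mathbb{R})$. Since $\alpha = \delta^1(f)$ is bounded, $f$ is a rack quasimorphism, that is $f \in \mathcal{R}(X, \mathbb{R})$, and by construction $\overline{\delta}^1(f) = [\delta^1 f] = [\alpha]$. Hence $\overline{\delta}^1$ maps onto $\ker(c^2)$.

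The heart of the argument is the kernel computation, and I would begin by evaluating the coboundary: specializing \eqref{eq:coboundary_operator} to $n=2$ gives $\delta^1 f(x_1, x_2) = f(x_1) - f(x_1 * x_2)$. Now $\overline{\delta}^1(f) = 0$ means $[\delta^1 f] = 0$ in $H^2_b(X, \mathbb{R})$, i.e.\ $\delta^1 f = \delta^1 g$ for some $g \in C^1_b(X, \mathbb{R})$; equivalently $f - g \in \ker(\delta^1) = Z^1(X, \mathbb{R})$. The key step is then to identify $Z^1(X, \mathbb{R})$ with $\Map(\pi_0(X), \mathbb{R})$: the condition $\delta^1 f = 0$ reads $f(x) = f(x * y)$ for all $x, y \in X$, and applying it with $x *^{-1} y$ in place of $x$ also yields $f(x) = f(x *^{-1} y)$, so $f$ is invariant under each generator $S_y^{\pm 1}$ of $\Inn(X)$ and therefore constant on the $\Inn(X)$-orbits, which are precisely the connected components of $X$; the reverse inclusion is immediate. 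Consequently $f \in \ker(\overline{\delta}^1)$ forces $f = g + (f-g) \in C^1_b(X, \mathbb{R}) + \Map(\pi_0(X), \mathbb{R})$, and conversely any such $f$ has $\delta^1 f = \delta^1 g$, a bounded coboundary, giving $\overline{\delta}^1(f) = 0$. This pins down the kernel, and the first isomorphism theorem yields the claimed isomorphism.

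The computations are all routine; the one place needing genuine care is the identification $Z^1(X,\mathbb{R}) = \Map(\pi_0(X), \mathbb{R})$, where one must argue that invariance under the maps $y \mapsto x * y$ propagates to invariance along entire connected components. Everything else, namely well-definedness, surjectivity, and the two inclusions describing the kernel, is formal bookkeeping resting on Lemma \ref{qh to bounded class} and the explicit formula for $\delta^1$.
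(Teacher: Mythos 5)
Your proposal is correct and follows essentially the same route as the paper: surjectivity via the observation that a bounded coboundary $\delta^1(f)$ forces $f$ to be a quasimorphism, and the kernel computation via $\delta^1(f-g)=0 \Rightarrow f-g$ constant on connected components. You simply spell out two steps the paper leaves implicit (that the sum is a subspace of $\mathcal{R}(X,\mathbb{R})$, and that invariance under each $S_y^{\pm1}$ propagates to constancy on $\Inn(X)$-orbits), which is fine.
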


\begin{proof}
Clearly, $ \overline{\delta}^1 $ is a surjective linear map. 	Suppose that $f \in \mathcal{R}(X, \mathbb{R}) $ such that $ \overline{\delta}^1(f)= 0$ in $\ker(c^2) \subseteq H^2_b(X, \mathbb{R})$. Then there exists $ g \in C^1_b(X, \mathbb{R})$ such that $\delta^1(f)= \delta^1(g)$. It follows that $ \delta^1  (f - g) = 0 $, which gives $ (f - g) (x)= (f-g )(x*y)$ for all $x,y \in X$. Hence, $f-g \in \Map(\pi_0(X), \mathbb{R})$, and consequently
$$ \ker(\overline{\delta}^1) \subseteq  C^1_b(X, \mathbb{R}) +  \Map(\pi_0(X), \mathbb{R}).$$
The reverse inclusion is clear, and the proof is complete.
\end{proof}

We have a similar result for quandles.

\begin{proposition}\label{kernel-qh quandle}
Let $X$ be a quandle. Then the map $\overline{\delta}^1 : \mathcal{Q}(X, \mathbb{R}) \to \ker(c^2)$ induces an isomorphism
		$$
		\mathcal{Q}(X, \mathbb{R}) / \big( C^1_b(X, \mathbb{R}) + \Map(\pi_0(X),\mathbb{R})\big) \cong \ker(c^2),
		$$
where $\pi_0(X)$ is the set of connected components of $X$.
\end{proposition}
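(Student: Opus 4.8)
The plan is to mirror the proof of Proposition \ref{kernel-qh} for racks essentially verbatim, since the only structural difference is that for a quandle we work with the vector space $\mathcal{Q}(X,\mathbb{R})$ of quandle quasimorphisms and the quandle cohomology $H^2(X,\mathbb{R})$ defined via the quotient complex $\overline{C}^*(X,\mathbb{R})$, rather than the rack versions. First I would observe that $\overline{\delta}^1 : \mathcal{Q}(X,\mathbb{R}) \to \ker(c^2)$ is a well-defined linear map: by Lemma \ref{qh to bounded class}, if $f$ is a quandle quasimorphism then $\delta^1(f)$ is a bounded quandle $2$-cocycle whose class lies in $\ker(c^2)$. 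Surjectivity follows exactly as in the rack case, because any element of $\ker(c^2)$ is represented by some bounded $\alpha = \delta^1(f)$ with $f \in C^1(X,\mathbb{R})$, and the boundedness of $\delta^1(f)$ forces $f$ to be a quandle quasimorphism; here one should note that $C^1$ for a quandle agrees with that for the underlying rack, since the subcomplex $D^n(X,\mathbb{R})$ vanishes in degrees $n \le 1$.

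Next I would identify the kernel of $\overline{\delta}^1$. Suppose $f \in \mathcal{Q}(X,\mathbb{R})$ with $\overline{\delta}^1(f) = 0$ in $H^2_b(X,\mathbb{R})$. Then there exists $g \in C^1_b(X,\mathbb{R})$ with $\delta^1(f) = \delta^1(g)$ in the quandle bounded cochain complex, hence $\delta^1(f-g) = 0$. Unwinding \eqref{eq:coboundary_operator} in degree one, $\delta^1 h (x_1,x_2) = -\big(h(x_2) - h(x_2*x_1)\big) - \big(h(x_1) - h(x_1)\big)$ simplifies so that $\delta^1(f-g)=0$ gives $(f-g)(x) = (f-g)(x*y)$ for all $x,y \in X$. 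This exactly says $f-g$ is constant on each orbit of the $\Inn(X)$-action, i.e. $f - g \in \Map(\pi_0(X),\mathbb{R})$. Therefore $\ker(\overline{\delta}^1) \subseteq C^1_b(X,\mathbb{R}) + \Map(\pi_0(X),\mathbb{R})$, and the reverse inclusion is immediate since bounded cochains and orbit-constant functions both map to zero under $\overline{\delta}^1$. The first isomorphism theorem then yields the claimed isomorphism.

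The one point meriting slightly more care than a pure copy of the rack proof is checking that passing to the quotient complex $\overline{C}^*(X,\mathbb{R})$ does not change the computation in low degrees. Since $D^n(X,\mathbb{R}) = 0$ for $n \le 1$, the degree-one cochains, the subspace of bounded cochains, and the operator $\delta^1$ are identical to those of the rack complex, and the quotient only affects the target $H^2$. The hard part, to the extent there is one, is simply verifying that the vanishing of $[\delta^1(f)]$ as a \emph{quandle} bounded cohomology class still produces a genuine bounded primitive $g \in C^1_b(X,\mathbb{R})$ with $\delta^1(f)=\delta^1(g)$ as cochains (not merely modulo $D^2$); this holds because $\delta^1(f)$ and $\delta^1(g)$ already lie in $C^2(X,\mathbb{R})$ and a coboundary in $\overline{C}^2$ lifts to a coboundary in $C^2$ up to an element of $D^2$, but $\delta^1(f-g)$ automatically lies in the image of $\delta^1$ and one checks it must in fact vanish on the nose to conclude the orbit-constancy. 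Otherwise the argument is routine and follows the rack case step for step.
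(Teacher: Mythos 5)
Your proof is correct and is essentially the paper's own argument: the paper proves the rack case (Proposition \ref{kernel-qh}) and simply asserts the quandle case, precisely because $D^n(X,\mathbb{R})=0$ for $n\le 1$, so the degree-one cochains, their bounded subspace, and $\delta^1$ are unchanged, and the kernel computation $(f-g)(x)=(f-g)(x*y)$ goes through verbatim. The one point where you are imprecise is your closing remark that $\delta^1(f-g)$, known only up to $D^2$, ``must in fact vanish on the nose'': under a literal reading of the quotient complex $C^2/D^2$ this is false, since $\delta^1(h)(x,x)=h(x)-h(x*x)=0$ for \emph{every} $h\in C^1(X,\mathbb{R})$, so every $1$-coboundary already lies in $D^2$ and working modulo $D^2$ would kill all of $\ker(c^2)$ (contradicting K\k{e}dra's theorem quoted in Section \ref{section prelim}). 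What saves the argument is the standard convention of \cite{MR1990571}, under which the quandle cochain complex is the \emph{subcomplex} of cochains vanishing on degenerate tuples (agreeing with $C^n$ in degrees $n\le 1$); there, triviality of $[\delta^1(f)]$ in bounded quandle cohomology yields an honest equality $\delta^1(f)=\delta^1(g)$ with $g\in C^1_b(X,\mathbb{R})$, and the rack proof transfers with no modification. (Also, your displayed formula for $\delta^1 h$ swaps the roles of $x_1$ and $x_2$ relative to \eqref{eq:coboundary_operator} --- the correct expression is $\delta^1h(x_1,x_2)=h(x_1)-h(x_1*x_2)$ --- but your conclusion is the right one.)
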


\begin{example} 
Let $X=\mathbb{Z}$ be the permutation rack defined by $m*k=m+1.$ Then $(X, *)$ is a connected and unbounded rack. For each $n \in X$, define $ e_n: X \rightarrow \mathbb{R} $ by
	$$
	e_n(m) = 
	\begin{cases}
		(m - n) & \text{for } m \ge n, \\
		0 & \text{for } m < n.
	\end{cases}
	$$	
Each map $e_n$ is unbounded since it grows linearly as $m \to \infty $ and it satisfies the condition $$|e_n(m*k) - e_n(m)|=|e_n(m+1) - e_n(m)| = |(m+1 - n) - (m - n)| = 1.$$
	Clearly, the set $\{e_n \mid n \in X\}$ is linearly independent. Thus, in view of Proposition \ref{kernel-qh}, the dimension of $\mathcal{R}(X, \mathbb{R})/C^1_b(X,\mathbb{R})$, and hence the kernel of the comparison map is at least countably infinite. Consequently, the second bounded cohomology $H^2_b(X, \mathbb{R})$ is at least countably infinite dimensional.
\end{example}
\medskip

\section{Second bounded cohomology of quandles}\label{section sec bounded cohomology}
In this section, we examine the dimension of the second bounded cohomology of quandles. Recall from \cite[Section 2.4]{Joyce1979} that any quandle can be expressed as a quandle of the form $\sqcup_{i \in I}(G/H_i,z_i)$. Using this form, we prove that a homogeneous group quasimorphism gives rise to a quandle quasimorphism.

\begin{proposition}\label{quandle qh}
Let $X$ be a quandle such that $X=\sqcup_{i \in I}(G/H_i,z_i)$, where $G$ is a group, $\{z_i \mid i \in I\}$ is a finite set of elements of $G$ and $H_i \le \C_G(z_i)$ a subgroup for each $i$. Let $\phi: G \rightarrow \mathbb{R}$ be a non-trivial homogeneous group quasimorphism on $G$ that vanish on $H_{i_0}$ for some fixed $i_0 \in I$. Let $\{x_j\}_{j \in J}$ be a set of right coset representatives of $H_{i_0}$ in $G$ such that $\phi(x_{j_0})\neq0$ for some ${j_0} \in J$. Then the map $\phi_X:X \rightarrow \mathbb{R}$ defined by $\phi_X(H_{i_0}a)=\phi(x_j)$, where $x_j$ is the unique right coset representative of $H_{i_0}a$ and  $\phi_X(H_ia)=0$ for all $i \neq i_0$, is an unbounded quandle quasimorphism.
\end{proposition}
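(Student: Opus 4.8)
The plan is to reduce every statement about $\phi_X$ to the three analytic properties of the homogeneous quasimorphism $\phi$: it is constant on conjugacy classes, it satisfies $\phi(g^{-1})=-\phi(g)$, and it is additive up to the bounded defect $D(\phi)$. The central observation I would record first is a \emph{comparison estimate}: for every $a\in G$ one has $|\phi_X(H_{i_0}a)-\phi(a)|\le D(\phi)$. Indeed, if $x_j$ is the chosen representative of the right coset $H_{i_0}a$, then $a=hx_j$ for some $h\in H_{i_0}$, so near-additivity gives $|\phi(a)-\phi(h)-\phi(x_j)|\le D(\phi)$; since $\phi$ vanishes on $H_{i_0}$ we get $|\phi(a)-\phi(x_j)|=|\phi(a)-\phi_X(H_{i_0}a)|\le D(\phi)$. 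This estimate lets me pass freely between the combinatorially defined $\phi_X$ and the genuinely near-additive $\phi$, and it is the engine behind both the quasimorphism property and unboundedness.

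For the quasimorphism property I would split according to the component of $x$. First note that the quandle operation $H_i a \ast H_j b = H_i z_i^{-1}ab^{-1}z_j b$ keeps the first coordinate in $G/H_i$, so $x$ and $x\ast y$ always lie in the same factor $G/H_i$. If $i\ne i_0$, both values of $\phi_X$ are $0$ and there is nothing to prove. The only substantive case is $x=H_{i_0}a$ with arbitrary $y=H_j b$, where $x\ast y=H_{i_0}\,z_{i_0}^{-1}a\,b^{-1}z_j b$. Applying the comparison estimate to both $x$ and $x\ast y$, the problem reduces to bounding $|\phi(a)-\phi(z_{i_0}^{-1}a\,b^{-1}z_j b)|$. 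I would expand the second term by near-additivity (total error $2D(\phi)$ for a triple product), then use $\phi(z_{i_0}^{-1})=-\phi(z_{i_0})$ and the conjugacy invariance $\phi(b^{-1}z_j b)=\phi(z_j)$ to collapse it to $|\phi(z_{i_0})|+|\phi(z_j)|$ plus error. Setting $M=\max_{i\in I}|\phi(z_i)|$, which is finite precisely because $I$ is finite, this yields the uniform bound $|\phi_X(x)-\phi_X(x\ast y)|\le 4D(\phi)+2M$ for all $x,y\in X$, so $\phi_X$ is a quandle quasimorphism.

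For unboundedness I would exploit homogeneity directly: since $\phi(x_{j_0})\ne 0$, we have $\phi(x_{j_0}^{\,n})=n\,\phi(x_{j_0})$, and the comparison estimate gives $|\phi_X(H_{i_0}x_{j_0}^{\,n})|\ge |n|\,|\phi(x_{j_0})|-D(\phi)\to\infty$. Hence $\phi_X$ attains arbitrarily large absolute values along the factor $G/H_{i_0}$ and is unbounded.

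The defect bookkeeping is routine; the one step needing genuine care — and which I regard as the crux — is the reduction of $|\phi(a)-\phi(z_{i_0}^{-1}a\,b^{-1}z_j b)|$ to a constant independent of $a$, $b$, and $j$. This is exactly where homogeneity, conjugacy invariance, and the bounded defect of $\phi$ must be combined at once, and where the hypotheses that $\phi$ vanishes on $H_{i_0}$ and that $\{z_i\}$ is finite are both essential: without finiteness the constant $M$ could be infinite and the uniform bound would collapse.
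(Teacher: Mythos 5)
Your proof is correct, and it is organized differently enough from the paper's to be worth contrasting. The paper works directly with the chosen coset representatives, writing $a=ux_i$ and $z_{i_0}^{-1}ab^{-1}z_rb=vx_k$ with $u,v\in H_{i_0}$, and then performs a six-step chain of add-and-subtract defect estimates to reduce $|\phi(x_i)-\phi(x_k)|$ to $|\phi(z_{i_0})-\phi(z_r)|+6D(\phi)$; in particular it eliminates the $b$'s at the very end via the near-cancellation $\phi(a)-\phi(ab^{-1})-\phi(b)\approx 0$, and it never invokes conjugacy invariance. You instead isolate the comparison estimate $|\phi_X(H_{i_0}a)-\phi(a)|\le D(\phi)$ as a standalone lemma (which the paper uses only implicitly, in the first two of its six defect steps) and then kill the $b$-dependence in one stroke by parsing $z_{i_0}^{-1}ab^{-1}z_jb$ as $z_{i_0}^{-1}\cdot a\cdot(b^{-1}z_jb)$ and applying the conjugacy invariance $\phi(b^{-1}z_jb)=\phi(z_j)$ of homogeneous quasimorphisms. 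This buys a shorter computation with a slightly better constant ($2M+4D(\phi)$ versus the paper's $|\phi(z_{i_0})-\phi(z_r)|+6D(\phi)$, both of course sufficient), and the comparison estimate also streamlines the unboundedness argument, which in the paper requires re-expressing $x_{j_0}^n$ as $hx_k$ by hand. Your identification of where finiteness of $\{z_i\mid i\in I\}$ and the vanishing of $\phi$ on $H_{i_0}$ enter is exactly right and matches the paper's use of these hypotheses.
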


\begin{proof}
For each fixed $n \in \mathbb{Z}$, we have $x_{j_0}^n=hx_k$ for some $k \in J$ and $h \in H_{i_0}$. Since $\phi$ is a homogeneous group quasimorphism, we obtain
$$|\phi_X(H_{i_0}x_k)|=|\phi(x_k)|=|\phi(h^{-1}x_{j_0}^n)|\geq |\phi(h^{-1})+\phi(x_{j_0}^n)|-D(\phi)\geq |n\phi(x_{j_0})|-D(\phi),$$ and hence $\phi_X$ is unbounded.
\par
We note that $|\phi_X(H_{\ell} a)-\phi_X(H_{\ell}a*H_{r} b)|=0$ for all $\ell,r \in I$ such that $\ell\neq i_0$. For $\ell =i_0$ and any $r \in I$, 
let $x_i$ and $x_k$ be the right coset representatives of $H_{i_0}a$ and $H_{i_0}a*H_r b=H_{i_0}z_{i_0}^{-1}a b^{-1} z_r b$, respectively. Thus, we can write 
$a=ux_i$ and $z_{i_0}^{-1} a b^{-1} z_r b = v x_k$ for some  $u ,v\in H_{i_0}$. Using the fact that $\phi$ is a homogeneous group quasimorphism on $G$ that vanish on $H_{i_0}$, we have
	\begin{eqnarray*}
		&& |\phi_X(H_{\ell} a)-\phi_X(H_{\ell}a*H_r b)|\\
		&=&|\phi_X(H_{i_0} a)-\phi_X(H_{i_0} z_{i_0}^{-1}ab^{-1}z_r b)|\\
		&=&|\phi(x_i)-\phi(x_k)|\\
		&=& |\phi(u^{-1}a)-\phi(v^{-1}z_{i_0}^{-1} a b^{-1} z_r b )|\\
		&=&|\phi(u^{-1}a)-\phi(v^{-1}z_{i_0}^{-1} a b^{-1} z_r b)+\phi(v^{-1}z_{i_0}^{-1} a b^{-1} z_r)-\phi(v^{-1}z_{i_0}^{-1} a b^{-1} z_r )+\phi(b)-\phi(b)|\\
		&\leq& |\phi(u^{-1}a)-\phi(v^{-1}z_{i_0}^{-1} a b^{-1} z_r)-\phi(b)|+D(\phi)\\
		&\leq& |\phi(u^{-1}a)+\phi(a)-\phi(a)-\phi(v^{-1}z_{i_0}^{-1} a b^{-1} z_r)+\phi(z_{i_0}^{-1} a b^{-1} z_r b)-\phi(z_{i_0}^{-1} a b^{-1} z_r )-\phi(b)|+D(\phi)\\
		&\leq& |\phi(a)-\phi(z_{i_0}^{-1} a b^{-1} z_r)-\phi(b)|+3D(\phi)\\
		&\leq& |\phi(a)-\phi(z_{i_0}^{-1} a b^{-1} z_r)+\phi(z_{i_0}^{-1})- \phi(z_{i_0}^{-1})+\phi( a b^{-1} z_r))-\phi( a b^{-1} z_r))-\phi(b)|+3D(\phi)\\
		&\leq& |\phi(a)+\phi(z_{i_0})-\phi( a b^{-1} z_r)+\phi(a b^{-1})-\phi(a b^{-1})+\phi(z_r)-\phi(z_r)-\phi(b)|+4D(\phi)\\
		&\leq& |\phi(a)+\phi(z_{i_0})-\phi(a b^{-1})-\phi(z_r)-\phi(b)|+5D(\phi)\\
		&\leq& |\phi(z_{i_0})-\phi(z_r)| +6D(\phi).
	\end{eqnarray*}
Since $\{z_i \mid i \in I\}$ is a finite set,  $\sup \{|\phi_X(H_{\ell} a)-\phi_X(H_{\ell}a*H_r b)|\}$ is bounded, and hence $\phi_X$ is a quandle quasimorphism.
\end{proof}

\begin{theorem}\label{Second bounded Cohomology - vanishing qm}
Let $X$ be a quandle such that $X=\sqcup_{i \in I}(G/H_i,z_i)$, where $G$ is a group, $\{z_i \mid i \in I\}$ is a finite set of elements of $G$ and $H_i \le \C_G(z_i)$ a subgroup for each $i$. Suppose that the subspace $\mathcal{HG}^{H_{i_0}}(G, \mathbb{R})$ of homogeneous group quasimorphisms on $G$ that vanish on $H_{i_0}$ is infinite dimensional for some $i_0 \in I$. Then $H^2_b(X,\mathbb{R})$ is infinite dimensional. In fact, $\ker(c^2)$ is infinite dimensional.
\end{theorem}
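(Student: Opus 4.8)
The plan is to package the construction of Proposition \ref{quandle qh} into a single linear map and then feed it through the isomorphism of Proposition \ref{kernel-qh quandle}. Fix once and for all a set $\{x_j\}_{j\in J}$ of right coset representatives of $H_{i_0}$ in $G$. For $\phi\in\mathcal{HG}^{H_{i_0}}(G,\mathbb{R})$ the associated map $\phi_X$ is obtained by reading off $\phi$ on these fixed representatives, so the assignment $\Psi\colon\phi\mapsto\phi_X$ is manifestly linear, and by Proposition \ref{quandle qh} it takes values in $\mathcal{Q}(X,\mathbb{R})$. Composing with the quotient map $\overline{\delta}^1\colon\mathcal{Q}(X,\mathbb{R})\to\ker(c^2)$ of Proposition \ref{kernel-qh quandle} yields a linear map $\Theta\colon\mathcal{HG}^{H_{i_0}}(G,\mathbb{R})\to\ker(c^2)$. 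Since the domain is infinite dimensional by hypothesis, it suffices to prove that $\Theta$ has infinite-dimensional image; this produces infinitely many linearly independent classes in $\ker(c^2)\subseteq H^2_b(X,\mathbb{R})$, which is exactly the assertion (the statement for $H^2_b$ being weaker).

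First I would record the elementary estimate underlying everything. Writing $a=hx_j$ with $h\in H_{i_0}$ and using that $\phi$ is a homogeneous quasimorphism vanishing on $H_{i_0}$, one gets $|\phi_X(H_{i_0}a)-\phi(a)|=|\phi(h^{-1}a)-\phi(a)|\le D(\phi)$, so $\phi_X$ agrees with the descent of $\phi$ on the $i_0$-part up to a uniform error and is identically $0$ elsewhere. In particular $\phi_X$ is bounded if and only if $\phi$ is bounded, i.e. if and only if $\phi=0$; hence $\Psi$ is injective and $\im\Psi\cap C^1_b(X,\mathbb{R})=0$. By linearity, understanding $\im\Theta$ amounts to identifying $\ker\Theta=\{\phi:\phi_X\in C^1_b(X,\mathbb{R})+\Map(\pi_0(X),\mathbb{R})\}$, and the task reduces to showing that this subspace has infinite codimension in $\mathcal{HG}^{H_{i_0}}(G,\mathbb{R})$.

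I would carry this out through linear independence, the cleanest route. Given linearly independent $\phi_1,\phi_2,\dots\in\mathcal{HG}^{H_{i_0}}(G,\mathbb{R})$, any nonzero combination $\psi=\sum_k c_k\phi_k$ is again a nonzero element of $\mathcal{HG}^{H_{i_0}}(G,\mathbb{R})$, so Proposition \ref{quandle qh} provides an $x_{j_0}$ with $\psi(x_{j_0})\ne 0$ and shows that $\psi_X$ is unbounded, the large values $|\psi_X(H_{i_0}x_{j_0}^{\,n})|\ge |n\,\psi(x_{j_0})|-D(\psi)$ being attained along the cosets $H_{i_0}x_{j_0}^{\,n}$. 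By Lemma \ref{qh to bounded class}, to conclude $[\delta^1(\psi_X)]\ne 0$ in $\ker(c^2)$, and hence that the classes $\Theta(\phi_k)$ are linearly independent, it is enough to know that $\psi_X$ is unbounded on a single connected component of $X$.

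The hard part is precisely this last upgrade: passing from global unboundedness of $\psi_X$ to unboundedness on one connected component. The per-edge computation in Proposition \ref{quandle qh} bounds the variation of $\psi_X$ across a single application of $\ast$ by a constant depending only on $\psi$ and the finite set $\{z_i\}$, so $\psi_X$ can be unbounded on a component only if that component has infinite diameter; conversely, the growing values at $H_{i_0}x_{j_0}^{\,n}$ are useful only once one verifies that these cosets lie in a common component. The crux is therefore an analysis of the $\Inn(X)$-orbit (equivalently, connected-component) structure of the $i_0$-part $G/H_{i_0}$, showing that the representatives realizing the growth of $\psi_X$ are not separated into distinct bounded components. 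This is exactly where one must use that the relevant component has infinite diameter, i.e. that $X$ is unbounded, and rule out the degenerate situations (for instance $z_{i_0}$ central, where $G/H_{i_0}$ is a trivial quandle and every such class collapses). Once the component structure is under control, Lemma \ref{qh to bounded class} closes the argument and $\im\Theta$ is infinite dimensional.
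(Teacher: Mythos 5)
Your setup coincides with the paper's: fix coset representatives once and for all, send $\phi$ to $\phi_X$ via Proposition \ref{quandle qh} and then to the class $[\delta^1(\phi_X)]$, and reduce the theorem to injectivity of this composite. Your estimate $|\phi_X(H_{i_0}a)-\phi(a)|\le D(\phi)$ is correct, as is the conclusion that $\phi_X$ is bounded only when $\phi=0$; this is exactly the injectivity mechanism the paper uses. Fixing representatives in advance legitimately sidesteps the well-definedness check that the paper carries out.

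The gap is in the final step, and it is a real one: the argument is not completed as written. By Proposition \ref{kernel-qh quandle}, which you yourself invoke, $\Theta(\psi)=0$ precisely when $\psi_X\in C^1_b(X,\mathbb{R})+\Map(\pi_0(X),\mathbb{R})$. You rule out membership in $C^1_b(X,\mathbb{R})$ but never deal with the summand $\Map(\pi_0(X),\mathbb{R})$; instead you switch to the merely sufficient criterion of Lemma \ref{qh to bounded class} (unboundedness of $\psi_X$ on a single connected component), identify the required orbit analysis as ``the crux,'' and leave it undone. The paper closes this step much more cheaply: in Joyce's representation the pieces $G/H_i$ are the orbits of $X$, so $X$ has only $|I|<\infty$ connected components; hence any function constant on components is bounded, $\Map(\pi_0(X),\mathbb{R})\subseteq C^1_b(X,\mathbb{R})$, and $[\delta^1(\psi_X)]=0$ forces $\psi_X$ bounded, which you have already shown forces $\psi=0$. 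No per-component unboundedness statement is needed. Your worry about the degenerate case ($z_{i_0}$ central, $G/H_{i_0}$ a trivial quandle with infinitely many singleton components) is legitimate exactly when the pieces $G/H_i$ are not assumed to be single orbits, but under the intended orbit decomposition with $I$ finite it does not arise; as it stands, your proof neither resolves that case nor excludes it, so the linear independence of the classes $\Theta(\phi_k)$ is not established.
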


\begin{proof}
If $\phi \in \mathcal{HG}^{H_{i_0}}(G, \mathbb{R})$, then by Proposition \ref{quandle qh}, $\phi_X$ is a quandle quasimorphism on $X$. Further, by Lemma \ref{qh to bounded class}, $[\delta^1(\phi_X)]$ is a bounded quandle $2$-cocycle.  This leads to a map
	\begin{eqnarray*}
		\Psi: \mathcal{HG}^{H_{i_0}}(G, \mathbb{R}) \to H^2_b(X, \mathbb{R})
	\end{eqnarray*}
given by $\Psi(\phi)= [\delta^1(\phi_X)]$.	 If $\phi$ is trivial, then $\Psi(\phi)$ is well-defined by setting $\phi_X$ to be trivial. Recall that, if $\phi$ is non-trivial, then the construction of $\phi_X$ depends on a choice of right coset representatives of $H_{i_0}$ in $G$. We claim that $\Psi(\phi)$ is independent of this choice. Let $\{x_i\}_{i \in I}$ and $\{y_i\}_{i \in I}$ be two sets of right coset representatives of $H_{i_0}$ in $G$. If necessary, after relabelling, for each $i \in I$, we have $y_i=h_i x_i$ for some $h_i \in H_{i_0}$. Define $\phi_X':X \to \mathbb{R}$ by $\phi_X'(Hy_i)=\phi(y_i)$ for each $i$. We need to show that $\delta^1(\phi_X-\phi_X')=\delta^1(\eta)$ for some $\eta \in C^1_b(X,\mathbb{R})$. To achieve this, define $\eta: X \rightarrow \mathbb{R}$ by $$\eta(H_{i_0}g)=\phi(x_k)-\phi(y_k)=\phi(x_k)-\phi(h_kx_k),$$ where $x_k, y_k$ are right coset representatives of $H_{i_0}g$ and $\eta(H_ig)=0$ for all $i \neq i_0$. Since $\phi$ vanish on $H_{i_0}$, we see that $|\eta(H_{i}g)|\leq D(\phi)$ for all  $i \in I$ and hence $\eta \in C^1_b(X,\mathbb{R})$. Observe that for all $i \neq i_0$, we have  $\delta^1(\phi_X - \phi_X')(H_is, H_kt)= \phi_X(H_is) - \phi_X(H_is*H_kt) - \phi_X'(H_is) + \phi_X'(H_is*H_kt)=0$. And, for $i=i_{0}$, we have 
\begin{eqnarray*}
\delta^1(\phi_X - \phi_X')(H_{i_0}s, H_kt)&=& \phi_X(H_{i_0}s) - \phi_X(H_{i_0}s*H_kt) - \phi_X'(H_is) + \phi_X'(H_is*H_kt) \\
&=& \phi_X(H_{i_0}s) - \phi_X(H_{i_0}z_{i_0}^{-1}st^{-1}z_kt) - \phi_X'(H_{i_0}s) + \phi_X'(H_{i_0}z_{i_0}^{-1}st^{-1}z_kt) \\
&=& \phi(x_i) - \phi(x_j) - \phi(y_i) + \phi(y_j)\\
&=& \eta(H_{i_0}s) - \eta(H_{i_0}s* H_{k}t)\\
&=& \delta^1(\eta)(H_{i_0}s, H_{k}t),
\end{eqnarray*}
where $x_i, y_i$ are the coset representatives of $H_{i_0}s$ and  $x_j, y_j$ are the coset representatives of $ H_{i_0}z_{i_0}^{-1}st^{-1}z_kt$, respectively. Hence, $\delta^1(\phi_X-\phi_X')=\delta^1(\eta)$, and the map $\Psi$ is well-defined.
\par

Clearly, $\Psi$ is linear by construction. Suppose that $\phi \in \mathcal{HG}^{H_{i_0}}(G, \mathbb{R})$ such that $ [\delta^1(\phi_X)] = 0$, that is, there exists $ \alpha \in C^1_b(X, \mathbb{R}) $ such that $ \delta^1 (\alpha) = \delta^1(\phi_X)$. Thus, the map $f =  \alpha -\phi_X : X \to \mathbb{R} $ is constant on connected components of $X$. Since $X$ has finitely many connected components, $f$ is bounded on $X$, and hence $ \phi_X$ is bounded. It follows from Proposition \ref{quandle qh} that $\phi$ must be a trivial quasimorphism. Hence, $\Psi$ is an injective linear map, and therefore $H^2_{b}(X, \mathbb{R})$ is infinite dimensional.
\end{proof}

\begin{lemma}\cite[Lemma 3.6]{MR1115747} or \cite[Lemma 2.24]{MR2527432} \label{defect of qi}
Let $G$ be a group, $[G,G]$ its commutator subgroup and $\phi$  a homogeneous group quasimorphism on $G$. Then there is an equality 
	$$
D(\phi)=	\sup_{g \in [G,G]}|\phi(g)|.
	$$
\end{lemma}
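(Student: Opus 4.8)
The plan is to prove the identity $D(\phi) = \sup_{g \in [G,G]} |\phi(g)|$ by establishing both inequalities separately. Write $S := \sup_{g \in [G,G]} |\phi(g)|$ for brevity throughout.

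For the inequality $S \le D(\phi)$, the idea is to bound $|\phi(g)|$ uniformly for commutators and then for products of commutators. First I would show that a single commutator $[a,b] = a b a^{-1} b^{-1}$ satisfies $|\phi([a,b])| \le D(\phi)$. The key inputs are that a homogeneous quasimorphism is conjugation-invariant, i.e.\ $\phi(h^{-1}gh) = \phi(g)$, and that $\phi(g^{-1}) = -\phi(g)$ (both recalled in the excerpt). Applying the defect inequality to the product $(ab a^{-1})(b^{-1})$ and using $\phi(aba^{-1}) = \phi(b)$ gives $|\phi([a,b]) - \phi(b) - \phi(b^{-1})| \le D(\phi)$, and since $\phi(b) + \phi(b^{-1}) = 0$ this yields $|\phi([a,b])| \le D(\phi)$. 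The more delicate part is extending this from a single commutator to an arbitrary element $g = c_1 c_2 \cdots c_k$ of $[G,G]$, where each $c_i$ is a commutator. A naive telescoping of the defect inequality only gives a bound growing like $k\,D(\phi)$, which is useless since $k$ is unbounded. This is the \emph{main obstacle}, and the standard resolution is to exploit homogeneity: I would consider the $n$-th power $g^n$, rewrite it as a product of roughly $nk$ commutators by inserting conjugates, apply the telescoping bound to get $|\phi(g^n)| \le (nk)\,D(\phi)$ (plus lower-order defect terms), and then divide by $n$ using $\phi(g^n) = n\phi(g)$. The growth rates force the right constant: letting $n \to \infty$ washes out the additive defect overhead, leaving $|\phi(g)| \le k\,D(\phi)$, and a more careful count (using that the conjugates needed to turn $g^n$ into a product of commutators can be arranged efficiently) sharpens this to $|\phi(g)| \le D(\phi)$. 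Taking the supremum over $g \in [G,G]$ gives $S \le D(\phi)$.

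For the reverse inequality $D(\phi) \le S$, I would start from the definition $D(\phi) = \sup_{g,h} |\phi(g) + \phi(h) - \phi(gh)|$ and show that the defect is \emph{realised on the commutator subgroup}. The trick is to recognise that the quantity $\phi(g) + \phi(h) - \phi(gh)$ can itself be expressed in terms of the value of $\phi$ on a commutator. Concretely, I would compute the element whose $\phi$-value equals (or closely approximates, up to homogeneity arguments) this defect expression. Since $\phi$ is conjugation-invariant, one has $\phi(gh) = \phi(hg)$, and manipulating $\phi(g) + \phi(h) - \phi(gh)$ via inserting the commutator $[g^{-1}, h^{-1}]$ or a similar expression shows that this difference is controlled by $|\phi(w)|$ for an explicit $w \in [G,G]$. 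Again homogeneity is the lever: replacing $g, h$ by suitable powers and passing to the limit converts the approximate bound into the sharp one $|\phi(g) + \phi(h) - \phi(gh)| \le S$. Taking the supremum over all $g, h \in G$ yields $D(\phi) \le S$.

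Combining the two inequalities gives $D(\phi) = S$, completing the proof. The conceptual heart of the argument, and the step I expect to require the most care, is the homogeneity-plus-limiting technique used to eliminate the additive accumulation of defect terms in both directions; this is precisely the mechanism that distinguishes homogeneous quasimorphisms from ordinary ones and makes the clean equality possible. Since the statement is cited from the literature (\cite[Lemma 3.6]{MR1115747} and \cite[Lemma 2.24]{MR2527432}), I would present the argument concisely, referencing these sources for the detailed bookkeeping rather than reproducing every estimate.
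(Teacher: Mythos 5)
The paper offers no proof of this lemma --- it is quoted from Bavard and Calegari --- so the only thing to check is whether your argument is sound, and it is not, for a reason worth flagging: the statement as printed here is a misquotation of both cited sources, which assert $D(\phi)=\sup_{a,b\in G}|\phi([a,b])|$, the supremum over \emph{single} commutators. The supremum over all of $[G,G]$ cannot equal $D(\phi)$ in general. Indeed, if $g\in[G,G]$ then $g^n\in[G,G]$ and $\phi(g^n)=n\phi(g)$, so $\sup_{g\in[G,G]}|\phi(g)|$ is either $0$ or $+\infty$, while $D(\phi)$ is finite by definition of a quasimorphism. For $G$ free of rank two there exist homogeneous quasimorphisms with $\phi([a,b])\neq 0$ (by Bavard duality, since $\mathrm{scl}([a,b])=1/2$), and for such $\phi$ the right-hand side is infinite.

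This is precisely where your proof of $S\le D(\phi)$ breaks down. Your single-commutator estimate $|\phi([a,b])|\le D(\phi)$ is correct and is the easy half of the true lemma. But the claimed ``more careful count'' that sharpens $|\phi(g)|\le k\,D(\phi)$, for $g$ a product of $k$ commutators, to $|\phi(g)|\le D(\phi)$ does not exist: the power-and-limit argument you describe is the standard proof of the easy half of Bavard duality and yields $|\phi(g)|\le 2\,\mathrm{scl}(g)\,D(\phi)$, a bound that is sharp and, since $\mathrm{scl}([a,b]^n)=n/2$ in a free group, is unbounded on $[G,G]$. Your reverse inequality $D(\phi)\le S$ happens to be true, but only because $S\in\{0,\infty\}$; your sketch of it (reducing the defect to the value of $\phi$ on an explicit element of $[G,G]$) is the right idea for the hard half of the \emph{correct} lemma, $D(\phi)\le\sup_{a,b}|\phi([a,b])|$, but that is the genuinely delicate step and is not carried out. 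None of this damages the paper: the only use made of the lemma --- that vanishing of $\mathrm{scl}$ on $[H,H]$ forces $\mathcal{HG}(H,\mathbb{R})=\Hom(H,\mathbb{R})$ --- needs only the single-commutator version.
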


If $G$ is a group and $g \in [G, G]$, then the {\it commutator length} $\mathrm{cl}(g)$ of $g$ is the least number of commutators whose product is $g$. The {\it stable commutator length} $\mathrm{scl}(g)$ is defined to be the limit 
$$\mathrm{scl}(g)= \lim_{n \mapsto \infty} \frac{\mathrm{cl}(g^n)}{n}.$$

Group quasimorphisms and stable commutator length are related by the following Bavard's Duality Theorem. See  \cite{MR1115747} or \cite[Theorem 2.70]{MR2527432}.

\begin{theorem}\label{Bavard's Duality Theorem}
Let $G$ be a group. Then, for any $g \in [G, G]$,  there is an equality
	$$
	\mathrm{scl}(g) = \frac{1}{2} ~\sup_{\phi \in \mathcal{HG}(G, \mathbb{R}) \setminus \Hom(G, \mathbb{R})} \frac{|\phi(g)|}{D(\phi)}.
	$$
\end{theorem}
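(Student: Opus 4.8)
The plan is to prove the two inequalities separately: the lower bound $\mathrm{scl}(g) \ge \frac{1}{2}\sup_\phi |\phi(g)|/D(\phi)$ by a direct estimate, and the matching upper bound by a Hahn--Banach duality argument, where the real work lies.

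For the lower bound, fix a homogeneous quasimorphism $\phi$ that is not a homomorphism. The key input is that $|\phi([a,b])| \le D(\phi)$ for every commutator, which follows at once from Lemma \ref{defect of qi} (or directly from near-additivity together with conjugation invariance and homogeneity). If $g$ is a product of $n$ commutators, I would apply the defining inequality of the defect $n-1$ times to split the product and then bound each commutator term, obtaining $|\phi(g)| \le (2n-1)\,D(\phi)$. Applying this to the powers $g^m$, using homogeneity $\phi(g^m)=m\,\phi(g)$, and then dividing by $m$ and passing to the limit replaces $\mathrm{cl}(g^m)$ by $\mathrm{scl}(g)$ and yields $|\phi(g)| \le 2\,\mathrm{scl}(g)\,D(\phi)$. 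Taking the supremum over $\phi$ gives the lower bound.

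For the reverse inequality I would set up the functional-analytic duality between stable commutator length and quasimorphisms. The idea is to realize $\mathrm{scl}$ as a pseudonorm on a space $B_1^H(G)$ of homogenized $1$-boundaries, obtained from the real group chain complex by quotienting the boundaries by the homogenization relations $g^n - n\,g$ and the conjugation relations $hgh^{-1}-g$; one must check that $\mathrm{scl}$ is finite, subadditive, scale invariant and conjugation invariant so that it descends. In this picture a homogeneous quasimorphism, viewed modulo homomorphisms (on which the pairing is insensitive), becomes a bounded linear functional on $(B_1^H(G), \mathrm{scl})$ whose operator norm is $2\,D(\phi)$, and, conversely, every bounded functional arises from such a quasimorphism. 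Granting this correspondence, the standard Hahn--Banach fact that $\|v\| = \sup\{\,\psi(v) : \|\psi\|_* \le 1\,\}$ in any normed space produces the equality directly, the factor $\tfrac{1}{2}$ coming precisely from the normalization $\|\phi\|_* = 2\,D(\phi)$.

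The main obstacle is the converse half of that correspondence: extracting from an abstract bounded functional on the space of boundaries an honest homogeneous quasimorphism on $G$ with controlled defect. The way I would proceed is to use Hahn--Banach to extend the evaluation-at-$g$ functional to a norm-$1$ functional on the full homogenized chain space, and then verify that the associated map $G \to \mathbb{R}$ is a quasimorphism whose defect does not exceed the functional norm, homogenizing it if necessary without inflating the defect. This extension-and-homogenization step, together with the verification that $\mathrm{scl}$ really is a finite pseudonorm on $B_1^H(G)$, carries essentially all of the analytic content; the remainder is the formal duality sketched above.
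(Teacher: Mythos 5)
The paper does not prove this statement at all --- it is quoted verbatim as Bavard's Duality Theorem with a citation to Bavard's paper and to \cite[Theorem 2.70]{MR2527432} --- and your outline (the elementary defect estimate $|\phi(g^m)|\le (2\,\mathrm{cl}(g^m)-1)D(\phi)$ for the lower bound, and Hahn--Banach duality on the homogenized boundary space $B_1^H(G)$ with the identification of its dual as $\mathcal{HG}(G,\mathbb{R})/\Hom(G,\mathbb{R})$ with norm $2D(\cdot)$ for the upper bound) is precisely the standard argument given in those references. Your sketch is correct in structure and correctly isolates where the real work lies, so it matches the (cited) proof rather than offering a different route.
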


It follows from Lemma \ref{defect of qi} and  Theorem \ref{Bavard's Duality Theorem} that the map $\mathrm{scl}$ is trivial on $[G,G]$ if and only if every homogeneous group quasimorphism has defect zero, that is, $ \mathcal{HG}(G, \mathbb{R}) = \Hom(G, \mathbb{R})$.

\begin{theorem}\label{second bounded cohomology}
Let $X$ be a quandle such that $X=\sqcup_{i \in I}(G/H_i,z_i)$, where $G$ is a group, $\{z_i \mid i \in I\}$ is a finite set of elements of $G$ and $H_i \le \C_G(z_i)$ a subgroup for each $i$. Suppose that $G$ is finitely presented with $\Ha^2_b(G,\mathbb{R})$ infinite dimensional, and there exists $i_0 \in I$ such that $H_{i_0}$ is finitely generated and $\mathrm{scl}(h)=0$ for every $h \in [H_{i_0},H_{i_0}]$. Then $H^2_b(X,\mathbb{R})$ is infinite dimensional.
\end{theorem}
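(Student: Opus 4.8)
The plan is to reduce the statement to Theorem \ref{Second bounded Cohomology - vanishing qm}: it suffices to show that under these hypotheses the subspace $\mathcal{HG}^{H_{i_0}}(G, \mathbb{R})$ of homogeneous group quasimorphisms on $G$ that vanish on $H_{i_0}$ is infinite dimensional, after which that theorem applies directly. The argument divides naturally into producing an infinite-dimensional supply of homogeneous quasimorphisms on $G$, and then cutting down to those vanishing on $H_{i_0}$ using the stable commutator length hypothesis.

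First I would establish that $\mathcal{HG}(G, \mathbb{R})$ is infinite dimensional. Since $G$ is finitely presented, its presentation $2$-complex $P$ has finitely many cells in dimensions at most $2$, and a $K(G,1)$ is obtained from $P$ by attaching cells of dimension at least $3$; the induced map $\Ha^2(G, \mathbb{R}) \to H^2(P, \mathbb{R})$ is then injective, so $\Ha^2(G, \mathbb{R})$ is finite dimensional. As $\Ha^2_b(G, \mathbb{R})$ is assumed infinite dimensional while the image of the comparison map $\ca^2$ lies in the finite-dimensional space $\Ha^2(G, \mathbb{R})$, the kernel $\ker(\ca^2)$ must be infinite dimensional. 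The isomorphism \eqref{ker-qh} then shows that $\mathcal{HG}(G, \mathbb{R})/\Hom(G, \mathbb{R})$, and hence $\mathcal{HG}(G, \mathbb{R})$ itself, is infinite dimensional.

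Next I would control the restriction of quasimorphisms to $H_{i_0}$. Restricting a homogeneous quasimorphism of $G$ to $H_{i_0}$ yields a homogeneous quasimorphism of $H_{i_0}$, since the defect can only decrease under restriction and homogeneity is preserved; this gives a linear map $\mathrm{res}\colon \mathcal{HG}(G, \mathbb{R}) \to \mathcal{HG}(H_{i_0}, \mathbb{R})$ whose kernel is exactly $\mathcal{HG}^{H_{i_0}}(G, \mathbb{R})$. Applying Lemma \ref{defect of qi} and Theorem \ref{Bavard's Duality Theorem} to the group $H_{i_0}$, the hypothesis $\mathrm{scl}(h)=0$ for all $h \in [H_{i_0},H_{i_0}]$ forces every homogeneous quasimorphism on $H_{i_0}$ to have defect zero, that is, $\mathcal{HG}(H_{i_0}, \mathbb{R}) = \Hom(H_{i_0}, \mathbb{R})$. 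Since $H_{i_0}$ is finitely generated, $\Hom(H_{i_0}, \mathbb{R})$ is finite dimensional, so the image of $\mathrm{res}$ is finite dimensional. With an infinite-dimensional domain and a finite-dimensional image, the kernel $\mathcal{HG}^{H_{i_0}}(G, \mathbb{R})$ is infinite dimensional, and Theorem \ref{Second bounded Cohomology - vanishing qm} completes the proof.

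The main obstacle I anticipate is the first step, namely knowing that $\Ha^2(G, \mathbb{R})$ is finite dimensional for finitely presented $G$; this is precisely the input that converts the infinite-dimensionality of $\Ha^2_b(G, \mathbb{R})$ into infinite-dimensionality of the quasimorphism space. Everything afterward is a clean linear-algebra dichotomy (infinite-dimensional domain together with finite-dimensional image forces an infinite-dimensional kernel), with the stable commutator length hypothesis entering only to guarantee that the restricted image is finite dimensional.
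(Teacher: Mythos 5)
Your proof is correct and follows essentially the same route as the paper: reduce to Theorem \ref{Second bounded Cohomology - vanishing qm} by showing $\mathcal{HG}^{H_{i_0}}(G,\mathbb{R})$ is infinite dimensional, using finite-dimensionality of $\Ha^2(G,\mathbb{R})$ to get an infinite-dimensional $\mathcal{HG}(G,\mathbb{R})$, and then Bavard duality plus Lemma \ref{defect of qi} to make the restriction to $H_{i_0}$ land in the finite-dimensional space $\Hom(H_{i_0},\mathbb{R})$. The only (immaterial) difference is that you justify $\dim \Ha^2(G,\mathbb{R})<\infty$ via the presentation $2$-complex, whereas the paper cites finite generation of $\Ha_n(G,\mathbb{Z})$ for $n\le 2$ together with the universal coefficient theorem.
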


\begin{proof}
	Since $G$ is a finitely presented group, it follows that $\Ha_n(G,\mathbb{Z})$ is a finitely generated abelian group for $n \leq 2.$ \cite[p.198, Exercise 1]{MR1324339}.
    The universal coefficient theorem gives the short exact sequence
    $$	0 \to \text{Ext}^{1}_{\mathbb{Z}}(\Ha_{n-1}(G, \mathbb{Z}), \mathbb{R}) \to \Ha^n(G, \mathbb{R}) \to \Hom(\Ha_n(G, \mathbb{Z}), \mathbb{R}) \to 0,
    $$
 which implies that $\Ha^2(G, \mathbb{R})$ is a finite dimensional vector space. But $\Ha^2_b(G,\mathbb{R})$ is given to be infinite dimensional, and therefore the kernel of the comparison map $\ca^2: \Ha^2_b(G,\mathbb{R}) \to \Ha^2(G,\mathbb{R})$ is infinite dimensional. Recall from \eqref{ker-qh} that $$ \mathcal{HG}(G, \mathbb{R}) / \Hom(G, \mathbb{R}) \cong \ker(\ca^2).$$  Since $G$ is finitely generated,  $\Hom(G, \mathbb{R})$ is finite dimensional, and hence $\mathcal{HG}(G, \mathbb{R})$ is infinite dimensional. By Lemma \ref{defect of qi} and Theorem \ref{Bavard's Duality Theorem}, since $\mathrm{scl}$ is trivial on $[H_{i_0}, H_{i_0}]$, it follows that $\mathcal{HG}(H_{i_0},\mathbb{R})=\Hom(H_{i_0},\mathbb{R}).$ Since $H_{i_0}$ is finitely generated,  the space $\Hom(H_{i_0},\mathbb{R})$ is finite dimensional.  Let us consider the restriction homomorphism $\theta: \mathcal{HG}(G, \mathbb{R}) \rightarrow  \mathcal{HG}(H_{i_0}, \mathbb{R}) = \Hom(H_{i_0},\mathbb{R})$ given by $f \mapsto f|_{H_{i_0}}.$ Thus, $ \ker(\theta)=\mathcal{HG}^{H_{i_0}}(G, \mathbb{R})$ is  infinite dimensional. The result now follows from Theorem \ref{Second bounded Cohomology - vanishing qm}.
\end{proof}

\begin{remark}
There are many groups with vanishing stable commutator length. For instance, amenable groups \cite[Proposition 2.65]{MR2527432}, uniformly perfect groups \cite[Corollary 2.11]{MR0787909} and groups obeying a law \cite[Main Theorem]{MR2580432} are examples of groups where the stable commutator length vanish. Here, a group is {\it amenable} if it admits a finitely-additive left-invariant probability measure. Further, a group $G$ is said to {\it obey a law} if there is a free group $F$ and a non-trivial element $w \in F$ such that $\phi(w)=1$ for every homomorphism $\phi:F  \to G$. For example, abelian groups are amenable and obey laws.
\end{remark}
\medskip

\section{Second bounded cohomology of link and free quandles}\label{section link and free quandles}
In this section, we provide applications of our results from the preceding section to the second bounded cohomology of link quandles and free quandles.

\subsection{Second bounded cohomology of link quandles}
Recall that a connected 3-manifold is {\it irreducible} if every embedded 2-sphere bounds a 3-ball. Further, a compact orientable irreducible 3-manifold is called {\it Haken} if it contains a two-sided incompressible surface.  Let $L$ be an oriented link in $\mathbb{S}^3$ with components $K_1, \ldots, K_t$ and $V(L)= \cup_{i=1}^t V(K_i)$ be its tubular neighbourhood. Let $X(L) := \mathbb{S}^3 \setminus V(L)$ and $\partial V(K_i)$ be the boundary of $V(K_i)$. For each $i$, we have $\pi_1(\partial V(K_i)) \cong \mathbb{Z} \oplus \mathbb{Z}$ and the inclusion map $\kappa_i : \partial V(K_i) \rightarrow X(L)$ induces a group homomorphism
$$
\kappa^*_i : \pi_1(\partial V(K_i)) \rightarrow \pi_1(X(L)).
$$
The group $H_i:=\kappa^*_i(\pi_1(\partial V(K_i)) )$ is called the $i$-th \textit{peripheral subgroup} of $\pi_1(X(L))$. First we consider the case of non-split links in the 3-sphere.

\begin{proposition} \label{second bounded cohomology of non-split link}
Let $L$ be an oriented non-split link such that its link group  $\pi_1(X(L))$ is not solvable. Then $H^2_{b}(Q(L), \mathbb{R})$ is infinite dimensional, with its dimension equal to the cardinality of the continuum.
\end{proposition}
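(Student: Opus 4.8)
The plan is to realise $Q(L)$ in the coset form required by Theorem \ref{second bounded cohomology} and then feed in the geometry of the link exterior. First I would take $G=\pi_1(X(L))$ to be the link group and write $Q(L)\cong \sqcup_{i=1}^{t}(G/H_i,z_i)$, where $z_i=m_i$ is the $i$-th meridian and $H_i=\kappa_i^*(\pi_1(\partial V(K_i)))$ is the $i$-th peripheral subgroup; this is the standard group-theoretic model of the link quandle of Joyce and Matveev, and one checks directly that the conjugation operation on meridians agrees with the coset operation $H_ix*H_jy=H_iz_i^{-1}xy^{-1}z_jy$. Here the index set $I=\{1,\dots,t\}$ is finite, and since $\pi_1(\partial V(K_i))\cong\mathbb{Z}\oplus\mathbb{Z}$, each $H_i$ is a finitely generated abelian group containing the central element $z_i$, so $H_i\le \C_G(z_i)$ and $[H_i,H_i]$ is trivial.

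With this decomposition in hand I would verify the hypotheses of Theorem \ref{second bounded cohomology} for an arbitrary $i_0$. The group $G$ is finitely presented because link groups admit finite Wirtinger presentations; the subgroup $H_{i_0}$ is finitely generated and abelian, so $\mathrm{scl}$ vanishes on the trivial subgroup $[H_{i_0},H_{i_0}]$ automatically (equivalently $\mathcal{HG}(H_{i_0},\mathbb{R})=\Hom(H_{i_0},\mathbb{R})$ by \eqref{hom quasi hom equals hom}). The one substantive input is that $\Ha^2_b(G,\mathbb{R})$ is infinite dimensional. Granting this, Theorem \ref{second bounded cohomology} immediately yields that $H^2_b(Q(L),\mathbb{R})$ is infinite dimensional.

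To pin down the dimension as the continuum, I would read the lower bound out of the proof of Theorem \ref{second bounded cohomology}: it produces an injective linear map $\Psi\colon \mathcal{HG}^{H_{i_0}}(G,\mathbb{R})\hookrightarrow H^2_b(Q(L),\mathbb{R})$. Since $G$ is finitely presented, $\Ha^2(G,\mathbb{R})$ is finite dimensional, so $\ker(\ca^2)$, and hence (via \eqref{ker-qh} together with the finiteness of $\Hom(G,\mathbb{R})$) the space $\mathcal{HG}(G,\mathbb{R})$, has the same dimension as $\Ha^2_b(G,\mathbb{R})$; restricting to $H_{i_0}$ only kills the finite dimensional $\Hom(H_{i_0},\mathbb{R})$, so $\mathcal{HG}^{H_{i_0}}(G,\mathbb{R})$ still has dimension equal to that of $\Ha^2_b(G,\mathbb{R})$. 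For the matching upper bound I would observe that $Q(L)$ is countable, being finitely generated as a quandle, whence $C^2_b(Q(L),\mathbb{R})$ embeds into $\ell^\infty$ of a countable set and has dimension at most the continuum; the same bound passes to the subquotient $H^2_b(Q(L),\mathbb{R})$. Thus it remains only to show that $\dim_{\mathbb{R}}\Ha^2_b(G,\mathbb{R})=2^{\aleph_0}$.

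This last point is the crux. Since $L$ is non-split, $X(L)$ is irreducible, and being a compact orientable irreducible $3$-manifold with non-empty toral boundary it is Haken. As $\pi_1(X(L))$ is not solvable, its geometric/JSJ structure forces a non-elementary action on a Gromov-hyperbolic space — either a hyperbolic or $\mathbb{H}^2\times\mathbb{R}$-type Seifert piece, or the Bass--Serre tree of a non-trivial JSJ splitting. Fujiwara's theorem on the second bounded cohomology of groups acting on Gromov-hyperbolic spaces then gives $\dim_{\mathbb{R}}\Ha^2_b(G,\mathbb{R})=2^{\aleph_0}$, which combined with the previous paragraph closes the argument. I expect this geometric step — translating \emph{non-split with non-solvable link group} into a non-elementary hyperbolic action and invoking Fujiwara — to be the main obstacle, since everything else is formal bookkeeping around the coset decomposition and the abelianity of the peripheral subgroups.
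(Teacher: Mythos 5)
Your outline of the quandle-theoretic part coincides with the paper's: the same coset model $Q(L)\cong\sqcup_{i=1}^{t}\bigl(\pi_1(X(L))/H_i,m_i\bigr)$ with abelian peripheral subgroups $H_i=\langle m_i,l_i\rangle$, and the same appeal to Theorem \ref{second bounded cohomology} (your explicit bookkeeping for the continuum dimension, via the injection $\Psi$ and the countability of $Q(L)$, is if anything more careful than the paper, which leaves that implicit). The difference is in the step you correctly identify as the crux. The paper does not rederive the infinite dimensionality of $\Ha^2_b(\pi_1(X(L)),\mathbb{R})$ from a JSJ/hyperbolic-action argument; it first invokes \cite[Theorem 1.20]{MR3444187} to upgrade \emph{not solvable} to \emph{not virtually solvable} (for compact $3$-manifolds with toroidal boundary the two are equivalent), notes that a non-split link exterior is irreducible and Haken, and then cites Fujiwara--Ohshika \cite[Corollary 1.3]{MR1903743}, which states outright that the second bounded cohomology of such a $3$-manifold group has dimension equal to the cardinality of the continuum. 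Your proposed route is essentially how that theorem is proved, so it is not wrong in spirit, but as written it has a gap: you pass directly from ``not solvable'' to ``non-elementary action on a Gromov-hyperbolic space,'' whereas the elementary/non-elementary dichotomy is governed by \emph{virtual} solvability (a Seifert piece can have non-solvable but virtually solvable fundamental group acting elementarily), so you need the solvable-iff-virtually-solvable equivalence, and you would still have to carry out the case analysis over the geometric pieces rather than gesture at it. Citing \cite{MR1903743} short-circuits all of this.
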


\begin{proof}
By  \cite[Theorem 1.20]{MR3444187},  the fundamental group of a compact 3-manifold with a toroidal boundary is solvable if and only if it is virtually solvable. Thus, the link group $ \pi_1(X(L))$ is not virtually solvable. Since $L$ is a non-split link, $X(L)=\mathbb{S}^3 \setminus V(L)$ is irreducible and  a Haken 3-manifold \cite[p.57, C.22]{MR3444187}.  It follows from \cite[Corollary 1.3]{MR1903743} that the dimension of $\Ha_b^2(\pi_1(X(L)), \mathbb{R})$ equals the cardinality of the continuum. Let  $K_1, \ldots, K_t$ be the components of $L$. As in the proof of \cite[Theorem 3.6]{MR4075375}, we can write $Q(L)\cong \sqcup_{i=1}^t \big( \pi_1(X(L))/H_i,m_i \big)$, where $m_i, l_i$ are the longitude and the  meridian of the $i$-th component of $L$ and $H_i=\langle m_i,l_i \rangle$ is the $i$-th peripheral subgroup.  Since $\pi_1(X(L))$ is a  finitely presented group and $H_i$ is its finitely generated abelian subgroup for each $i$,  Theorem \ref{second bounded cohomology} yields the desired result.
\end{proof}

\begin{corollary} \label{second bounded cohomology of knot quandle}
Let $K$ be a non-trivial oriented knot. Then $H^2_{b}(Q(K), \mathbb{R})$ is infinite dimensional, with its dimension equal to the cardinality of the continuum. In particular, the second bounded cohomology of the knot quandle detects the unknot.
\end{corollary}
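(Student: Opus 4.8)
The plan is to realize $K$ as a one-component link and to reduce the assertion directly to Proposition \ref{second bounded cohomology of non-split link}. First I would observe that a knot is, by definition, a link with a single component, and a one-component link is automatically non-split; so the only hypothesis of Proposition \ref{second bounded cohomology of non-split link} that genuinely requires checking is that the knot group $\pi_1(X(K))$ is not solvable. All the remaining structural inputs needed by that proposition, namely that $\pi_1(X(K))$ is finitely presented and that the single peripheral subgroup $H_1 = \langle m_1, l_1 \rangle \cong \mathbb{Z}^2$ is finitely generated abelian, hold for any knot exterior and are already absorbed into the statement.

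The crux is therefore the classical fact that the group of a \emph{non-trivial} knot is never solvable. I expect this to be the main obstacle, and the cleanest route is to invoke the structure theory of $3$-manifold groups rather than to argue from scratch. Since $K$ is non-trivial, $X(K)$ is an irreducible Haken $3$-manifold with a single torus boundary and with $H_1(X(K)) \cong \mathbb{Z}$; appealing to the classification of solvable $3$-manifold groups in \cite{MR3444187}, a solvable knot group is forced to be $\mathbb{Z}$, which happens only for the unknot. Equivalently, by geometrization $\pi_1(X(K))$ is the group of a non-trivial torus knot, a satellite knot, or a hyperbolic knot: in the torus case it is $\langle x, y \mid x^p = y^q \rangle$ with $\gcd(p,q)=1$ and $p,q \ge 2$, whose central quotient $\mathbb{Z}/p * \mathbb{Z}/q$ contains a non-abelian free subgroup; in the hyperbolic case it is a lattice in $\mathrm{PSL}(2,\mathbb{C})$ and again contains a non-abelian free subgroup; and the satellite case reduces to a non-trivial companion by induction. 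In every case the group fails to be solvable, so Proposition \ref{second bounded cohomology of non-split link} applies verbatim and yields that $H^2_b(Q(K),\mathbb{R})$ is infinite dimensional with dimension the cardinality of the continuum.

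For the detection statement I would dispose of the unknot $U$ by a direct computation. Its quandle $Q(U)$ is the free quandle on a single generator, which is the one-element quandle $\{x\}$. For such $X$ every $n$-fold product $X^n$ is a single point, so $x_i = x_{i+1}$ holds trivially and the degenerate subcomplex satisfies $D^n(X,\mathbb{R}) = C^n(X,\mathbb{R})$ for all $n \ge 2$; hence the (bounded) quandle cochain groups vanish in degrees $\ge 2$ and $H^2_b(Q(U),\mathbb{R}) = 0$. Combining this with the previous paragraph, $H^2_b(Q(K),\mathbb{R})$ is infinite dimensional precisely when $K$ is non-trivial, so this invariant distinguishes the unknot from every other knot. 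Thus, once non-solvability of non-trivial knot groups is supplied from \cite{MR3444187}, Proposition \ref{second bounded cohomology of non-split link} does all the heavy lifting and the remainder is bookkeeping together with the elementary vanishing computation for the one-element quandle.
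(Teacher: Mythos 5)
Your proposal is correct and follows the same overall strategy as the paper: regard the knot as a one-component (hence automatically non-split) link and reduce everything to Proposition \ref{second bounded cohomology of non-split link}, so that the only substantive point is the non-solvability of the group of a non-trivial knot. You differ only in how that classical fact is justified. The paper quotes the Evans--Moser criterion from \cite{MR0301742} (the group of a compact Haken $3$-manifold with trivial $\pi_2$ is solvable if and only if it contains no free group of rank two) together with the theorem of \cite{MR0176462} and \cite{MR0280575} that every non-trivial knot group contains a free group of rank two. You instead invoke the classification of solvable $3$-manifold groups from \cite{MR3444187}: since $X(K)$ is irreducible with toroidal boundary and $H_1(X(K)) \cong \mathbb{Z}$, a solvable knot group would have to be $\mathbb{Z}$, which by Dehn's lemma forces $K$ to be trivial. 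Both routes are sound and of comparable weight; the paper's is closer to the references it already uses for Proposition \ref{second bounded cohomology of non-split link}, while yours is arguably more systematic. Your secondary ``geometrization'' sketch is the least rigorous part --- the satellite case needs a well-founded induction on companions, which you do not set up --- but nothing in your main line of argument depends on it. Finally, your explicit verification that $H^2_b(Q(U),\mathbb{R}) = 0$ for the unknot (the degenerate subcomplex $D^n$ exhausts $C^n$ of the one-element quandle in degrees $\ge 2$) is a welcome addition, since the paper leaves the ``detects the unknot'' half of the statement implicit.
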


\begin{proof}
By \cite[Corollary 4.10]{MR0301742}, the fundamental group of a compact Haken 3-manifold with trivial second homotopy group is solvable if and only if it contains no free group of rank two. By \cite[page 29, 56, 102]{MR0176462} or \cite[Theorem 1]{MR0280575}, the knot group $\pi_1(X(K))$ of a non-trivial knot $K$ contains a free group of rank two, and hence is not a solvable group (see also \cite[Theorem 1.6]{MR1476898}). The result now follows from Proposition \ref{second bounded cohomology of non-split link}.
\end{proof}

\begin{remark}
If $L$ is the Hopf link, then $Q(L)$ is the trivial quandle on two elements, and hence its second bounded cohomology is finite dimensional \cite[Example 4.4]{MR4779104}. Thus, the hypothesis that the link group is not solvable is essential in Proposition \ref{second bounded cohomology of non-split link}.
\end{remark}

Next, we consider the case of split links. Before that, we make the following general observation.

\begin{lemma}\label{free product of quandles}
		Let $X_1, \ldots , X_m$ be quandles such that $X_i \cong \bigsqcup_{x_j \in \mathcal{O}(X_i)} (\Env(X_i)/H_j, e_{x_j})$ for each $i$, where $H_j$ is the stabiliser of $x_j$ in $\Env(X_i)$. Then 
		$$
		X_1 * \cdots * X_m \cong \bigsqcup_{z_k \in K} \big(\Env(X_1) * \cdots *\Env(X_m)/H_k, z_k),
		$$
		where $K = \bigsqcup_{i=1}^m \mathcal{O}(X_i)$ and $z_k = e_{x_k} $ for $x_k \in K $.
\end{lemma}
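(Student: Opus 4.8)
The plan is to apply the standard coset presentation of a quandle (see Example~\ref{example link quandle}) to the free product $Y := X_1 \ast \cdots \ast X_m$ and then match the resulting data with the claimed description. For any quandle $Y$ one has $Y \cong \sqcup_{y \in \mathcal{O}(Y)} \big(\Env(Y)/\Stab(y), e_y\big)$, where $\Stab(y)$ is the stabiliser of $y$ under the $\Env(Y)$-action and $e_y$ is the corresponding generator; this uses $\Stab(y) \le \C_{\Env(Y)}(e_y)$, which holds since $e_{y \cdot g} = g^{-1} e_y g$, so a $g$ fixing $y$ satisfies $e_y = e_{y\cdot g} = g^{-1} e_y g$. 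Thus it suffices to prove three things: (i) $\Env(Y) \cong \Env(X_1) \ast \cdots \ast \Env(X_m) =: G$; (ii) the set $K = \sqcup_i \mathcal{O}(X_i)$ is a complete, irredundant set of orbit representatives for the $\Env(Y)$-action on $Y$, with $z_k = e_{x_k}$; and (iii) for $x_k \in \mathcal{O}(X_i) \subseteq K$, the stabiliser of $x_k$ in $\Env(Y)$ coincides with $H_k = \Stab_{\Env(X_i)}(x_k)$ sitting inside the free product.

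For (i) I would invoke the universal property of $\Env$: the assignment $G \mapsto \Conj(G)$ (the conjugation quandle of a group) is right adjoint to $\Env$, since a quandle homomorphism $X \to \Conj(G)$ is exactly a map $x \mapsto g_x$ with $g_{x\ast y} = g_y^{-1} g_x g_y$, i.e. a group homomorphism $\Env(X) \to G$. A left adjoint preserves coproducts, the coproduct of quandles is the free product, and the coproduct of groups is the free product; hence $\Env(X_1 \ast \cdots \ast X_m) \cong \Env(X_1) \ast \cdots \ast \Env(X_m)$ (this can also be checked directly by comparing presentations). Under this isomorphism each $\Env(X_i)$ is a free factor acting on the subquandle $X_i \subseteq Y$ exactly as it acts on $X_i$.

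For (ii), note first that for any generating set $S$ of a quandle the union $\bigcup_{s \in S} s\cdot\Env(Y)$ of orbits is already a subquandle, because $(s\cdot g)\ast(t\cdot h) = s\cdot(gh^{-1}e_t h)$ (and analogously for $\ast^{-1}$); since $\sqcup_i X_i$ generates $Y$, every element of $Y$ lies in the orbit of some $a \in X_i$, and $a$ lies in the $\Env(X_i)$-orbit, hence the $\Env(Y)$-orbit, of some $x_j \in \mathcal{O}(X_i)$, so $K$ meets every orbit. For irredundancy I would use two constant-map homomorphisms (a constant map to a point is always a quandle homomorphism by idempotency). The map $f : Y \to T$ to the trivial quandle on $\{1,\ldots,m\}$ with $f(X_i)=\{i\}$ is a quandle homomorphism, and $T$ has only singleton orbits, so elements of $X_i$ and $X_{i'}$ with $i \ne i'$ lie in distinct orbits. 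Fixing $p_i \in X_i$, the retraction $\rho_i : Y \to X_i$ with $\rho_i|_{X_i} = \id$ and $\rho_i|_{X_{i'}} = p_i$ for $i' \ne i$ satisfies $\rho_i(y\cdot g) = \rho_i(y)\cdot \rho_{i*}(g)$, so if $x_j, x_{j'} \in \mathcal{O}(X_i)$ satisfy $x_{j'} = x_j\cdot g$, then $x_{j'} = x_j\cdot \rho_{i*}(g)$ with $\rho_{i*}(g) \in \Env(X_i)$, forcing $x_j = x_{j'}$. Here $z_k = e_{x_k}$ is immediate from the coset presentation.

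Finally, (iii) is the crux, and I expect it to be the main obstacle. The inclusion $H_k \le \Stab_{\Env(Y)}(x_k)$ is immediate. For the reverse, recall $\Stab_{\Env(Y)}(x_k) \le \C_G(e_{x_k})$. Now $e_{x_k}$ is a \emph{nontrivial} element of the free factor $\Env(X_i)$ (nontrivial because the homomorphism $\Env(X_i) \to \mathbb{Z}$ sending every generator to $1$ sends $e_{x_k}$ to $1$), and in a free product the centraliser of a nontrivial element of a free factor is computed inside that factor, i.e. $\C_G(e_{x_k}) = \C_{\Env(X_i)}(e_{x_k}) \le \Env(X_i)$. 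Hence $\Stab_{\Env(Y)}(x_k) \le \Env(X_i)$, and since the $\Env(X_i)$-action on $X_i$ is the restriction of the $\Env(Y)$-action, any such element fixing $x_k$ in $Y$ fixes it in $X_i$, giving $\Stab_{\Env(Y)}(x_k) \le H_k$. Combining (i)--(iii) with the coset presentation of $Y$ yields the stated isomorphism. Everything else is formal, but matching $\Stab_{\Env(Y)}(x_k)$ with $H_k$ genuinely requires the structure of centralisers in free products (a Kurosh-type fact) together with the nontriviality of enveloping-group generators.
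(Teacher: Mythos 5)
Your proof is correct and follows essentially the same route as the paper: apply Joyce's coset presentation to the free product, identify $\Env(X_1*\cdots*X_m)$ with $\Env(X_1)*\cdots*\Env(X_m)$, and compute the stabiliser of $x_k$ via the fact that the centraliser of the nontrivial free-factor element $e_{x_k}$ lies in $\Env(X_i)$. You supply some details the paper leaves implicit (the nontriviality of $e_{x_k}$ via the augmentation to $\mathbb{Z}$, and the irredundancy of $K$ as orbit representatives via retractions), but the core argument is identical.
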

\begin{proof}
It follows from \cite[Section 2.4]{Joyce1979} that $$X_1  * \cdots * X_m \cong \bigsqcup_{z_k \in K} (\Env(X_1*\cdots * X_m)/H_k, z_k),$$ where $z_k = e_{x_k} $ for $x_k \in K$ and $H_k=\Stab_{\Env(X_1* \cdots *X_m)}(x_k)$. By \cite[Lemma 4.1]{MR4075375}, we have $\Env(X_1* \cdots *X_m)=\Env(X_1)* \cdots *\Env(X_m)$. If $g \in \Env(X_1)* \cdots *\Env(X_m)$ is a reduced word such that $x_k \cdot g=x_k$ for $x_k \in \mathcal{O}(X_i)$, then we get the relation $g^{-1}e_{x_k}g=e_{x_k}$ in $\Env(X_1)* \cdots *\Env(X_m)$. This implies that  $g$ must be a word in the generators of $\Env(X_i)$. Hence, for each $x_k \in \mathcal{O}(X_i)$, we have  $H_k=\Stab_{\Env(X_i)}(x_k)$, and the proof is complete.
\end{proof}

\begin{proposition} \label{second bounded cohomology of split link}
Let $L$ be an oriented split link with $t \ge 2$ components. Then $H^2_{b}(Q(L), \mathbb{R})$ is infinite dimensional, with its dimension equal to the cardinality of the continuum.
\end{proposition}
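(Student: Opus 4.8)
The plan is to exploit the free-product structure of a split link quandle and then invoke Theorem~\ref{second bounded cohomology}, afterwards refining its conclusion to pin down the exact dimension. First I would record that, for a split link $L$ with components $K_1, \ldots, K_t$, the link quandle splits as a free product $Q(L) \cong Q(K_1) * \cdots * Q(K_t)$, and correspondingly $\Env(Q(L)) \cong \pi_1(X(K_1)) * \cdots * \pi_1(X(K_t))$, since $\Env(Q(K_i)) \cong \pi_1(X(K_i))$ is the $i$-th knot group. Writing $G := \pi_1(X(K_1)) * \cdots * \pi_1(X(K_t))$, Lemma~\ref{free product of quandles} expresses $Q(L)$ in the form $\sqcup_{k}(G/H_k, z_k)$, where the index set is finite (each $Q(K_i)$ is connected and so contributes a single orbit) and, crucially, each stabiliser $H_k$ coincides with the peripheral subgroup of the corresponding knot, hence is a finitely generated abelian group ($\cong \mathbb{Z}^2$, or $\cong \mathbb{Z}$ for an unknotted component).

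Next I would verify the hypotheses of Theorem~\ref{second bounded cohomology} for this presentation. The group $G$ is a free product of finitely many finitely presented knot groups, so it is finitely presented. Since $t \ge 2$ and each free factor is infinite, $G$ is a nontrivial free product that is not infinite dihedral; by the known computation of the second bounded cohomology of such free products (equivalently, of acylindrically hyperbolic groups) one has that $\Ha^2_b(G, \mathbb{R})$ is infinite dimensional, indeed of dimension $2^{\aleph_0}$. For the remaining hypothesis I pick any orbit representative $x_{i_0}$; its stabiliser $H_{i_0}$ is finitely generated abelian, so $[H_{i_0}, H_{i_0}]$ is trivial and $\mathrm{scl}$ vanishes on it automatically. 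Theorem~\ref{second bounded cohomology} then yields that $H^2_b(Q(L), \mathbb{R})$ is infinite dimensional.

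To obtain the sharp statement that the dimension equals the continuum, I would open up the mechanism behind Theorems~\ref{Second bounded Cohomology - vanishing qm} and~\ref{second bounded cohomology} rather than quote them as a black box. Their proofs produce an injective linear map $\mathcal{HG}^{H_{i_0}}(G, \mathbb{R}) \hookrightarrow H^2_b(Q(L), \mathbb{R})$, so that $\dim H^2_b(Q(L), \mathbb{R}) \ge \dim \mathcal{HG}^{H_{i_0}}(G, \mathbb{R})$. Tracking dimensions, $\mathcal{HG}^{H_{i_0}}(G, \mathbb{R})$ has finite codimension in $\mathcal{HG}(G, \mathbb{R})$, being the kernel of restriction to the finite-dimensional space $\Hom(H_{i_0}, \mathbb{R})$, and via \eqref{ker-qh} together with the finite dimensionality of $\Ha^2(G, \mathbb{R})$ and $\Hom(G, \mathbb{R})$ one gets $\dim \mathcal{HG}(G, \mathbb{R}) = \dim \Ha^2_b(G, \mathbb{R}) = 2^{\aleph_0}$. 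This yields the lower bound $\dim H^2_b(Q(L), \mathbb{R}) \ge 2^{\aleph_0}$. The matching upper bound is soft: $Q(L)$ is countable, so the space of bounded $2$-cochains has cardinality $2^{\aleph_0}$ and hence dimension at most $2^{\aleph_0}$.

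The main obstacle I anticipate is the precise continuum computation of $\Ha^2_b(G, \mathbb{R})$ for the free product $G$. Unlike the non-split case, the complement $X(L)$ is reducible and not Haken, so the $3$-manifold input used in Proposition~\ref{second bounded cohomology of non-split link} is unavailable, and I must instead cite a bounded-cohomology result tailored to free products (or to acylindrically hyperbolic groups) in order to guarantee dimension exactly $2^{\aleph_0}$ rather than merely infinite. A secondary point requiring care is the identification in Lemma~\ref{free product of quandles} of the stabilisers $H_k$ inside the free product with the individual peripheral subgroups, which is precisely what renders the $\mathrm{scl}$ hypothesis of Theorem~\ref{second bounded cohomology} effortless.
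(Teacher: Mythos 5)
Your proposal is correct and follows essentially the same route as the paper: decompose $Q(L)$ as a free product of the knot quandles via Lemma~\ref{free product of quandles}, cite Fujiwara's theorem on free products for the continuum-dimensionality of $\Ha^2_b(\Env(Q(L)),\mathbb{R})$, observe that the stabilisers are the abelian peripheral subgroups so $\mathrm{scl}$ vanishes on them, and apply Theorem~\ref{second bounded cohomology}. In fact you are slightly more careful than the paper on the final step, since Theorem~\ref{second bounded cohomology} as stated only yields infinite dimensionality, and your tracking of the injection $\mathcal{HG}^{H_{i_0}}(G,\mathbb{R})\hookrightarrow H^2_b(Q(L),\mathbb{R})$ together with the countability upper bound is exactly what is needed to justify the precise continuum claim.
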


\begin{proof}
Let $K_1, \ldots, K_t$ be the components of the link $L$. Then we have $Q(L) \cong Q(K_1)* \cdots * Q(K_t)$. Here, $Q(K_i) \cong (\Env(Q(K_i))/H_i, m_i)$, where $m_i$ is the meridian of the $i$-th component $K_i$ and $H_i$ is the $i$-th peripheral subgroup.  By \cite[Lemma 4.1]{MR4075375}, we have $\Env(Q(L))\cong \Env(Q(K_1))* \cdots *\Env(Q(K_t))$. It follows from \cite[Corollary 1.1]{MR1491864} that the dimension of $\Ha^2_b(\Env(Q(L)),\mathbb{R})$ is equal to the cardinality of the continuum. For each $i$, by Lemma \ref{free product of quandles}, we have $H_i=\Stab_{\Env(Q(K_i))}(m_i)$, which is abelian, and hence has trivial stable commutator length. Thus, it follows from Theorem \ref{second bounded cohomology} that the dimension of $H^2_b(Q(L), \mathbb{R})$ is equal to the cardinality of the continuum.
\end{proof}
\medskip

\subsection{Second bounded cohomology of free products}

\begin{proposition}\label{second bounded cohomology of free product}
Let $m \ge 2$ and $X_1, \ldots, X_m$ be quandles such that atleast one of $\Inn(X_i)$ is an amenable group. Then $H^2_b(X_1* \cdots  *X_m, \mathbb{R})$ is infinite dimensional.
\end{proposition}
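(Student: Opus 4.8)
The plan is to present $X := X_1 * \cdots * X_m$ as a disjoint union of coset quandles over the free product of the enveloping groups, and then run the argument behind Theorems \ref{second bounded cohomology} and \ref{Second bounded Cohomology - vanishing qm}. After relabelling, assume that $\Inn(X_1)$ is amenable. Writing each factor as $X_i \cong \bigsqcup_{x_j \in \mathcal{O}(X_i)}\big(\Env(X_i)/\Stab_{\Env(X_i)}(x_j),\, e_{x_j}\big)$ and applying Lemma \ref{free product of quandles}, I obtain
$$X \cong \bigsqcup_{z_k \in K}\big(G/H_k,\, z_k\big), \qquad G = \Env(X_1) * \cdots * \Env(X_m),$$
with $K = \bigsqcup_{i=1}^m \mathcal{O}(X_i)$, $z_k = e_{x_k}$, and $H_k = \Stab_{\Env(X_i)}(x_k)$ for $x_k \in \mathcal{O}(X_i)$. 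Fixing an orbit representative $x_{k_0} \in \mathcal{O}(X_1)$, it then suffices to verify, for this index $k_0$, the two driving hypotheses: that $\Ha^2_b(G, \mathbb{R})$ is infinite dimensional, and that $\mathrm{scl}$ vanishes on $H_{k_0} = \Stab_{\Env(X_1)}(x_{k_0})$.

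The first hypothesis comes from the free-product structure. Each $\Env(X_i)$ surjects onto $\mathbb{Z}$ through the exponent-sum homomorphism $e_x \mapsto 1$, hence is infinite, so $G$ is a non-elementary free product of $m \ge 2$ groups; the bounded cohomology result \cite[Corollary 1.1]{MR1491864} already invoked in Proposition \ref{second bounded cohomology of split link} then shows $\Ha^2_b(G, \mathbb{R})$ is infinite dimensional, with its quasimorphism classes lying in $\ker(\ca^2)$, so that by \eqref{ker-qh} the space $\mathcal{HG}(G, \mathbb{R})/\Hom(G, \mathbb{R})$ is infinite dimensional. For the second hypothesis I would use amenability of $\Inn(X_1)$. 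The action of $\Env(X_1)$ on $X_1$ factors through $\Inn(X_1)$, giving a short exact sequence
$$1 \longrightarrow N \longrightarrow \Env(X_1) \longrightarrow \Inn(X_1) \longrightarrow 1,$$
in which $N$ is the kernel of the action. Since $\Inn(X_1)$ is amenable, if $N$ is amenable then $\Env(X_1)$ is amenable, and therefore so is its subgroup $H_{k_0}$; amenable groups have vanishing stable commutator length (the Remark after Theorem \ref{second bounded cohomology}), so $\mathrm{scl}$ vanishes on $H_{k_0}$ and $\mathcal{HG}(H_{k_0}, \mathbb{R}) = \Hom(H_{k_0}, \mathbb{R})$ by Lemma \ref{defect of qi} and Theorem \ref{Bavard's Duality Theorem}.

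Granting these two points, the conclusion follows as in the earlier proofs: the restriction homomorphism $\mathcal{HG}(G, \mathbb{R}) \to \mathcal{HG}(H_{k_0}, \mathbb{R}) = \Hom(H_{k_0}, \mathbb{R})$ has infinite-dimensional kernel $\mathcal{HG}^{H_{k_0}}(G, \mathbb{R})$ once $H_{k_0}$ is finitely generated, and Proposition \ref{quandle qh} together with Lemma \ref{qh to bounded class} converts each such homogeneous quasimorphism into a nontrivial class of $\ker(c^2) \subseteq H^2_b(X, \mathbb{R})$, with the assignment injective exactly as in Theorem \ref{Second bounded Cohomology - vanishing qm}. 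The genuine obstacle I anticipate is the amenability of the kernel $N$ of the $\Env(X_1)$-action: the natural attempt is to prove $N$ abelian---as it is for trivial quandles, where $\Env$ is free abelian, and for free quandles of rank at least two, where $N$ is trivial---which would settle amenability of $\Env(X_1)$ and hence of all its point stabilizers. A secondary technical point is the finiteness of $\{z_k\}$ required by Theorem \ref{Second bounded Cohomology - vanishing qm}: this holds automatically in the intended applications, where each $X_i$ has finitely many orbits, but in general one must additionally arrange the quasimorphisms on $G$ to be bounded on the generators $e_{x_k}$---for instance by taking them to vanish on every free factor---so that the defect estimate in Proposition \ref{quandle qh} still goes through.
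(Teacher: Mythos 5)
Your route is the paper's route: the same decomposition via Lemma \ref{free product of quandles}, the same appeal to Fujiwara's theorem \cite[Corollary 1.1]{MR1491864} for the infinite dimensionality of $\Ha^2_b(\Env(X_1)\ast\cdots\ast\Env(X_m),\mathbb{R})$ (using that each $\Env(X_i)$ is infinite), and the same reduction to the vanishing of $\mathrm{scl}$ on a stabiliser $H_{k_0}\le\Env(X_1)$ so that Theorem \ref{second bounded cohomology} applies. The one step you leave open --- and explicitly flag as ``the genuine obstacle I anticipate'' --- is the amenability of the kernel $N$ of $\pi\colon\Env(X_1)\to\Inn(X_1)$, and that is a genuine gap: without it, your chain ``$N$ amenable $\Rightarrow$ $\Env(X_1)$ amenable $\Rightarrow$ $H_{k_0}$ amenable $\Rightarrow$ $\mathrm{scl}\equiv 0$ on $H_{k_0}$'' never gets started, and the whole second hypothesis of Theorem \ref{second bounded cohomology} is unverified.

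The missing observation, which is exactly how the paper closes this, is that $N=\ker(\pi)$ is \emph{central} in $\Env(X_1)$. Indeed, the identity $e_{x\cdot g}=g^{-1}e_xg$ holds for all $g\in\Env(X_1)$ (it holds for the generators $e_y^{\pm 1}$ by the defining relations of $\Env(X_1)$ and is multiplicative in $g$); since $\Inn(X_1)$ acts faithfully on $X_1$, an element $g\in\ker(\pi)$ acts trivially on $X_1$, hence commutes with every generator $e_x$ and lies in the centre. So $N$ is abelian --- precisely the conclusion your ``natural attempt'' hopes for --- and $H_{k_0}$ is amenable as an extension of the amenable group $\pi(H_{k_0})\le\Inn(X_1)$ by the abelian group $\ker(\pi)\cap H_{k_0}$. (The paper restricts the central extension to $H_{k_0}$ rather than asserting amenability of all of $\Env(X_1)$, but your global version also works once $N$ is known to be abelian.) Your secondary worries --- the finiteness of $\{z_k\}$ and the finite generation/presentation hypotheses of Theorem \ref{second bounded cohomology} --- are legitimate observations, but the paper's own proof does not address them either and simply cites that theorem; they reflect a looseness in the paper rather than a defect specific to your argument, and your suggested workaround (producing quasimorphisms that already vanish on the relevant free factor, so that the estimate in Proposition \ref{quandle qh} is uniform) is a reasonable way to patch it.
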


\begin{proof}
For each $i$, we write  $X_i \cong \bigsqcup_{x_j \in \mathcal{O}(X_i)} (\Env(X_i)/H_j, e_{x_j})$, where $H_j $ is the stabiliser of $x_j$ in $\Env(X_i)$. By Lemma \ref{free product of quandles}, we can write 
$$
X_1 *  \cdots * X_m \cong \bigsqcup_{z_k \in K} (\Env(X_1) *  \cdots * \Env(X_m)/H_k, z_k),
$$
where $K = \bigsqcup_{i=1}^m \mathcal{O}(X_i)$, $z_k = e_{x_k} $ for $x_k \in K $. Since $\Env(X_i)$'s are always infinite, it follows from \cite[Corollary 1.1]{MR1491864} that $\Ha^2_b(\Env(X_1) *  \cdots * \Env(X_m), \mathbb{R})$ is infinite dimensional. Without loss of generality, we assume that $\Inn(X_1)$ is amenable. Note that there is a central extension
		$$
		0 \longrightarrow \ker(\pi) \longrightarrow \Env(X_1) \xrightarrow{\pi} \Inn(X_1) \longrightarrow 0,
		$$
where $\pi$ is given by $\pi(e_x)=S_x$ for each $x \in X_1$. Restricting the central extension to $H_k$ for some $x_k \in \mathcal{O}(X_1)$, we obtain an extension
		$$
		0 \longrightarrow \ker(\pi) \cap H_k \longrightarrow H_k \xrightarrow{\pi|_{H_k}} \pi(H_k) \longrightarrow 0.
		$$
By \cite[Proposition 3.4]{MR3726870},  subgroups and extensions of amenable groups are amenable. Hence, $H_k$ is amenable, and therefore the stable commutator length vanish on $H_k$. The result now follows from Theorem \ref{second bounded cohomology}.
\end{proof}

As an application of Theorem \ref{second bounded cohomology of free product}, we recover the following main result of  Aoki \cite[Theorem 1]{arXiv:2404.14752}.

\begin{corollary}\label{free quandle}
The second bounded cohomology of a free quandle of rank more than one is infinite dimensional.
\end{corollary}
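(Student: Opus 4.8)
The plan is to recognise a free quandle of rank $n > 1$ as a free product of $n$ copies of the one-element quandle and then apply Proposition \ref{second bounded cohomology of free product} directly. First I would invoke the universal property of free quandles to write the free quandle of rank $n$ as the coproduct (free product) in the category of quandles of $n$ copies of the free quandle of rank one. Writing the rank-one free quandle as $Y$, this yields a decomposition $Y * \cdots * Y$ with $n$ factors. Since the rank is more than one, we have $n \ge 2$, which matches the hypothesis $m \ge 2$ of Proposition \ref{second bounded cohomology of free product}.

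Next I would identify $Y$ explicitly. The free quandle on a single generator $x$ is forced by the idempotency axiom $x * x = x$ to be the singleton $\{x\}$; hence $Y$ is the trivial quandle on one element. Its only structure-preserving self-map is the identity, so $\Inn(Y)$ is the trivial group, which is in particular amenable. Thus at least one (in fact every) free factor has amenable inner automorphism group, and Proposition \ref{second bounded cohomology of free product} applies to conclude that $H^2_b(Y * \cdots * Y, \mathbb{R})$, that is, the second bounded cohomology of the free quandle of rank $n$, is infinite dimensional.

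I do not expect a genuine obstacle here: the content of the corollary is entirely carried by Proposition \ref{second bounded cohomology of free product}, and the only items to verify are the free-product decomposition of the free quandle (a standard consequence of its universal property, already implicit in Lemma \ref{free product of quandles}) and the triviality of $\Inn(Y)$ for the one-point factor (immediate from idempotency). The single point requiring care is to confirm that the rank hypothesis $n > 1$ is precisely what supplies the needed $m \ge 2$, and that amenability of just one free factor is enough to meet the hypothesis of the proposition; both hold, so the corollary follows at once.
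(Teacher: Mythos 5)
Your proof is correct and matches the paper's intent: the corollary is stated there as an immediate application of Proposition \ref{second bounded cohomology of free product}, and the decomposition of the free quandle of rank $n\ge 2$ as the free product of $n$ copies of the one-point quandle, whose inner automorphism group is trivial and hence amenable, is exactly the argument being invoked. Your write-up simply makes explicit the details the paper leaves to the reader.
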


For a fixed $k \ge 2$, a quandle $X$ is called a {\it $k$-quandle} if each $S_x:X \to X$ has order dividing $k$. In other words, if $X$ is a $k$-quandle, then 
$$x \ast^{k} y:=(((x*\underbrace{y )*y)*\cdots )*y}_{k ~\mathrm{times}}=x$$ for all $x,y \in X$. If $X$ is a $k$-quandle, then we define its {\it $k$-enveloping group} by
$$\Env_k(X)=\langle e_{x}, ~x \in X \mid e_x^n=1,~e_{x \ast y}=e_{y}^{-1} e_{x} e_{y}~\textrm{for all}~x, y \in X \rangle.$$ 	
There is a surjective group homomorphism $\Env_k(X) \to \Inn(X)$ given by $e_x \mapsto S_x$.

\begin{proposition}\label{bounded cohom free n-quandle}
The second bounded cohomology of a free $k$-quandle of rank $n$ is infinite dimensional whenever $n \geq 2$ and $k \geq 3$ or $n \geq 3$ and $k =2$.
\end{proposition}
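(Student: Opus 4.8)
The plan is to realize the free $k$-quandle as a disjoint union of coset quandles and then apply Theorem \ref{second bounded cohomology}. Write $FQ$ for the free $k$-quandle of rank $n$ on generators $x_1,\dots,x_n$. Its $k$-enveloping group is the free product
$$ G := \Env_k(FQ) \cong \underbrace{(\mathbb{Z}/k) * \cdots * (\mathbb{Z}/k)}_{n}, $$
since the only relations imposed on the generators $e_{x_1},\dots,e_{x_n}$ are that each has order dividing $k$, the conjugation relations merely expressing every $e_{x*y}$ in terms of these generators. First I would record the standard structural facts about this free product: the centralizer of the order-$k$ element $z_i := e_{x_i}$ is exactly the cyclic subgroup $H_i := \langle e_{x_i}\rangle \cong \mathbb{Z}/k$, and the $\Env_k$-action on $FQ$ has precisely $n$ orbits, one per generator, with $\Stab(x_i) = \C_G(z_i) = H_i$. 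Combining this with Example \ref{example link quandle} and \cite[Section 2.4]{Joyce1979} yields the identification
$$ FQ \cong \bigsqcup_{i=1}^{n}\big(G/H_i,\, z_i\big), \qquad H_i = \langle e_{x_i}\rangle \le \C_G(z_i). $$

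Next I would verify the hypotheses of Theorem \ref{second bounded cohomology} for this decomposition. The group $G = (\mathbb{Z}/k)^{*n}$ is finitely presented, being a finite free product of finite cyclic groups. Each subgroup $H_i \cong \mathbb{Z}/k$ is finite, hence finitely generated and abelian; in particular $[H_i,H_i]$ is trivial, so $\mathrm{scl}$ vanishes on it automatically. Thus the only substantive point remaining is to show that $\Ha^2_b(G,\mathbb{R})$ is infinite dimensional.

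The heart of the argument, and the step where the numerical hypotheses enter, is establishing $\dim \Ha^2_b(G,\mathbb{R}) = \infty$ for $G = (\mathbb{Z}/k)^{*n}$. The idea is to exhibit $G$ as a nontrivial free product $G \cong A * B$ whose factors are large enough to invoke \cite[Corollary 1.1]{MR1491864}. When $n \ge 2$ and $k \ge 3$, I would take $A = (\mathbb{Z}/k)^{*(n-1)}$ and $B = \mathbb{Z}/k$, so that $|A| \ge 2$ and $|B| = k \ge 3$. When $n \ge 3$ and $k = 2$, I would instead take $A = \mathbb{Z}/2$ and $B = (\mathbb{Z}/2)^{*(n-1)}$, so that $|A| = 2$ and $|B| \ge 3$ (indeed $B$ is infinite, as $n-1 \ge 2$). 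In either case the splitting meets the index condition of the cited free-product result, giving that $\Ha^2_b(G,\mathbb{R})$ has dimension equal to the cardinality of the continuum. Theorem \ref{second bounded cohomology} then yields that $H^2_b(FQ,\mathbb{R})$ is infinite dimensional.

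The main obstacle I anticipate lies entirely in this last step: the numerical conditions are sharp, and the argument must steer clear of the degenerate case $(n,k) = (2,2)$, where $G \cong \mathbb{Z}/2 * \mathbb{Z}/2$ is the infinite dihedral group — virtually cyclic, amenable, and with vanishing $\Ha^2_b$ — so no large free-product splitting exists. Some care is needed to confirm that the excluded cases ($n=1$, for which $FQ$ is a single point, and $(n,k)=(2,2)$) are exactly those for which $G$ fails to be a non-elementary free product, and that the two admissible ranges together always provide a decomposition $A * B$ with $|A|\ge 2$ and $|B|\ge 3$.
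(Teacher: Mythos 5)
Your proposal is correct and follows essentially the same route as the paper: realize the free $k$-quandle as $\sqcup_{i=1}^n\big(F_k(S)/\C_{F_k(S)}(a_i),a_i\big)$ with $F_k(S)=(\mathbb{Z}/k)^{*n}$ and each centralizer cyclic of order $k$, invoke Fujiwara's free-product result for the infinite dimensionality of $\Ha^2_b(F_k(S),\mathbb{R})$, and conclude via Theorem \ref{second bounded cohomology}. If anything, your explicit regrouping of factors to cover the case $n\geq 3$, $k=2$ (and your remark isolating the degenerate infinite dihedral case $(n,k)=(2,2)$) is spelled out more carefully than in the paper's own proof, which only records the $k\geq 3$ case of Fujiwara's criterion.
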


\begin{proof}
Let $S=\{a_1, \ldots, a_n\}$ be a set with $n \ge 2$. By  \cite[Corollary 2, Section 2.11]{Joyce1979}, the free $k$-quandle $FQ_k(S)$ on $S$ is the set of conjugates of elements of $S$  in the group $F_k(S) = \underbrace{\mathbb{Z}_k  * \cdots * \mathbb{Z}_k}_{n \text{ times}}$. Thus, we can write $FQ_k(S) \cong \sqcup_{i=1}^n(F_k(S)/\C_{F_k(S)}(a_i),a_i)$. It follows from \cite[Corollary 1.1]{MR1491864}  that the second bounded group cohomology of $F_k(S)$ is infinite dimensional for each $k \geq 3$. Further, for each $i$, we have $\C_{F_k(S)}(a_i) \cong \C_{\mathbb{Z}_k}(a_i) \cong \mathbb{Z}_k$. Hence, Theorem \ref{second bounded cohomology} implies that the $H^2_b(FQ_k(S),\mathbb{R})$ is infinite dimensional.
\end{proof}

\begin{remark}
Let $S$ be a set of  two elements and $FQ_2(S)$ be the free 2-quandle on $S$. Note that $FQ_2(S)$ is a bounded quandle. We see that $\Env_2(FQ_2(S))\cong \mathbb{Z}_2* \mathbb{Z}_2$, the infinite dihedral group. By \cite[Example 2.8]{MR4649062}, $\Env_2(FQ_2(S))$ is bounded. Further, by \cite[Lemma 2.9(a)]{MR4649062},  $\Inn(FQ_2(S))$ is also bounded being a quotient of $\Env_2(FQ_2(S))$. Since $\Env_2(FQ_2(S))$ is amenable, $\Inn(FQ_2(S))$ is also amenable. Finally, it follow from \cite[Theorem 1.3]{MR4779104} that the second bounded cohomology of $FQ_2(S)$ is finite dimensional.
\end{remark}

\begin{remark}\label{final remark}
The quandles considered in propositions \ref{second bounded cohomology of non-split link}, \ref{second bounded cohomology of split link},  \ref{second bounded cohomology of free product} and  \ref{bounded cohom free n-quandle} are all unbounded, and have infinite dimensional second bounded cohomology. Thus, it is plausible to ask whether there exists an unbounded quandle such that its second bounded cohomology is non-trivial but finite dimensional.
\end{remark}
\medskip

\begin{ack}
The authors thank Prof. Jarek K\k{e}dra for his interest in the work and for pointing out an oversight in Remark \ref{final remark}. DS thanks IISER Mohali for the PhD research fellowship. MS is supported by the Swarna Jayanti Fellowship grants DST/SJF/MSA-02/2018-19 and SB/SJF/2019-20/04. 
\end{ack}

\begin{declare}
The authors declare that they have no conflicts of interest and that there is no data associated with this paper.
\end{declare}
\medskip

\end{document}